\newtheorem{thm}{Theorem}
\numberwithin{thm}{section}
\newtheorem{cor}[thm]{Corollary}
\newtheorem{prop}[thm]{Proposition}
\theoremstyle{definition}
\newtheorem{defn}[thm]{Definition}
\theoremstyle{remark}
\newtheorem{rmk}[thm]{Remark}
\newcommand{\co}{\colon\thinspace}
\newcommand{\mb}[1]{\mathbb{#1}}
\newcommand{\mf}[1]{\mathfrak{#1}}
\newcommand{\mc}[1]{\mathcal{#1}}
\newcommand{\mi}[1]{\mathscr{#1}}
\newcommand{\op}{{op}}
\newcommand{\too}{\xrightarrow}
\newcommand{\NFin}{{\mathrm{NFin}_*}}
\DeclareMathOperator{\Hom}{Hom}
\DeclareMathOperator{\Map}{Map}
\DeclareMathOperator{\Nat}{Nat}
\DeclareMathOperator{\Fun}{Fun}
\DeclareMathOperator{\Mul}{Mul}
\DeclareMathOperator{\Sec}{Sec}
\DeclareMathOperator{\ev}{ev}
\DeclareMathOperator{\Set}{Set}
\DeclareMathOperator{\sSet}{sSet}
\DeclareMathOperator{\Alg}{Alg}
\DeclareMathOperator{\Cat}{Cat}
\DeclareMathOperator{\Op}{Op}
\DeclareMathOperator{\Inert}{Inert}
\DeclareMathOperator*{\into}{\hookrightarrow}
\DeclareMathOperator*{\bvt}{\otimes}
\title{Lax monoidality for products of enriched higher categories}
\author{Tyler Lawson\thanks{The author was partially supported by NSF
    grant 2208062.}}
\begin{document}
\maketitle

\begin{abstract}
  We prove that a lax $\mathbb{E}_{n+1}$-monoidal functor from $\mathcal V$ to $\mathcal W$ induces a lax $\mathbb{E}_n$-monoidal functor from $\mathcal V$-enriched $\infty$-categories to $\mathcal W$-enriched $\infty$-categories in the sense of Gepner--Haugseng.

  We prove this as part of a general-purpose interaction with the Boardman--Vogt tensor product $\otimes$: given a construction that takes an $\mathcal E$-monoidal $\infty$-category to a category expressible in diagrammatic terms, we give a criterion for it to take $(\mathcal{O} \otimes \mathcal{E})$-monoidal $\infty$-categories to $\mathcal{O}$-monoidal $\infty$-categories using a ``pointwise'' monoidal structure.
\end{abstract}

\section{Introduction}

\begin{epigraphs}
\qitem{The cases where $n \leq p$ are comparatively easy. [...] We will, however, give a unified proof which makes these cases look just as bad as the others.}{J. W. Milnor, \textit{Topology from the differentiable viewpoint}}
\end{epigraphs}
\begin{epigraphs}
\qitem{Every time I look, the baby's eating sand. I turn around, sand. \\Where does this sand come from? \\I don't know. So, I let them eat it.}{\textit{Orphan Black}}
\end{epigraphs}

Classical enriched categories are functorial, and we can take products of them. More specifically:
\begin{itemize}
\item If $f\co V \to W$ is lax monoidal, then we can base-change along $f$, turning any $V$-enriched category $C$ into a W-enriched category $f_* C$.
\item If $C$ is $V$-enriched and $D$ is $W$-enriched, then there is an external product $C \boxtimes D$ which is a $(V \times W)$-enriched category: we take the product of the object sets and the Hom-objects.
\item These interact well: if $V$ is symmetric monoidal, then the product $m\co V \times V \to V$ is a lax monoidal functor, and the product $C \mathop\boxtimes^V D = m_*(C \boxtimes D)$ is part of a symmetric monoidal structure on $V$-enriched categories.
\item This symmetric monoidal structure is functorial: for a lax symmetric monoidal functor $f\co V \to W$, the functor $f_*\co \Cat^V \to \Cat^W$ is also lax symmetric monoidal.
\end{itemize}

Except for one detail, all of this has already been generalized to enriched $\infty$-categories. For example, Gepner and Haugseng proved that the external product makes the functor $\mc V \mapsto \Cat_\infty^{\mc V}$ lax symmetric monoidal from monoidal $\infty$-categories to $\infty$-categories \cite[5.7.11]{gepner-haugseng-enriched}. If $\mc V$ is $\mb E_{n+1}$-monoidal, then they applied this to give $\Cat_\infty^{\mc V}$ the structure of an $\mb E_{n}$-monoidal $\infty$-category, and gave generalizations using the Boardman--Vogt style tensor product of $\infty$-operads \cite[2.2.5]{lurie-higheralgebra}. Alternative approaches were also developed by Heine \cite[7.17]{heine-enriched} and Hinich \cite[3.5.3]{hinich-enrichedyoneda}.

Unfortunately, while $\Cat_\infty^{\mc V}$ is functorial in \emph{lax} monoidal functors \cite[5.7.6]{gepner-haugseng-enriched}, in the above references the $\mc O$-monoidal structures on $\Cat_\infty^{\mc V}$ are only functorial in \emph{strong} monoidal functors. For instance, Gepner--Haugseng's proof produces a lax symmetric monoidal functor
\[
  \Cat_\infty^{(-)}\co \Alg_{\mb E_1}(\Cat_\infty) \to \Cat_\infty.
\]
We can apply $\Alg_{\mb E_n}$ to both sides to get a functor
\[
  \Alg_{\mb E_{n+1}}(\Cat_\infty) \to \Alg_{\mb E_n}(\Cat_\infty)
\]
by Dunn additivity. However, for any $\infty$-operad $\mc O$, maps $\mc V \to \mc W$ in $\Alg_{\mc O}(\Cat_\infty)$ are strong $\mc O$-monoidal functors rather than lax ones. (Similarly, Heine's construction in terms of $\mc O$-algebras in monoidal categories, and Hinich's in terms of $\mc O$-algebras in planar operads, both unwind to functoriality in strict functors.)

The main application of this note is that we have this lax functoriality. For the majority of readers, the following result is probably as much detail as is needed.

\begin{thm}
  \label{thm:mainthm}
  Let $\Cat_\infty^{(-)}$ denote the functor $\mc V \mapsto \Cat_\infty^{\mc V}$, sending an $\mb E_1$-monoidal $\infty$-category to the category of $\mc V$-enriched $\infty$-categories. Then $\Cat^{(-)}$ extends to a functor from $\mb E_{n+1}$-monoidal $\infty$-categories and \emph{lax} functors to $\mb E_n$-monoidal $\infty$-categories and lax functors.
\end{thm}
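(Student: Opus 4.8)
The plan is to deduce Theorem \ref{thm:mainthm} from the general criterion advertised in the abstract, applied to the construction $F = \Cat_\infty^{(-)}$ with $\mc E = \mb E_1$ and $\mc O = \mb E_n$. Dunn additivity identifies $\mb E_n \otimes \mb E_1 \simeq \mb E_{n+1}$, so an $\mb E_{n+1}$-monoidal $\infty$-category is the same datum as an $\mb E_n$-algebra in $\mb E_1$-monoidal $\infty$-categories; under this identification the criterion should produce the desired $\mb E_n$-monoidal structure on $\Cat_\infty^{\mc V}$ together with its lax functoriality. Thus essentially all of the work lies in formulating and proving the criterion, and the passage to the stated theorem is a matter of specialization.

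The starting point is the lax symmetric monoidal functor $\Cat_\infty^{(-)}\co \Alg_{\mb E_1}(\Cat_\infty) \to \Cat_\infty$ of Gepner--Haugseng, whose lax structure maps are the external products $\boxtimes$. The obstruction to lax functoriality is that merely applying $\Alg_{\mb E_n}$ to this functor remembers only \emph{strong} $\mb E_n$-monoidal functors of $\mb E_n$-algebras, hence only strong functors of enriched categories. To repair this I would exploit the ``diagrammatic'' description of $\Cat_\infty^{\mc V}$: for a fixed space of objects it is cut out of a category of $\mc V$-valued diagrams (Gepner--Haugseng's categorical algebras indexed by the $\Delta$-construction) by a Segal/completeness condition, and a lax monoidal functor $\mc V \to \mc W$ induces a functor on such diagram categories precisely because its structure maps $\mc V^{\otimes k} \to \mc V$ point in the direction needed to transport the composition data. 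This is the content of \cite[5.7.6]{gepner-haugseng-enriched}; the new input is to make the same diagrammatic transport compatible with the additional $\mc O$-directions.

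Concretely, I would build the total $\mc O$-monoidal $\infty$-category $\left(\Cat_\infty^{\mc V}\right)^{\otimes} \to \mc O^{\otimes}$ as a coCartesian fibration by a ``pointwise'' formula: over a color of $\mc O$ the fiber is $\Cat_\infty^{\mc V}$ for the corresponding $\mb E_1$-monoidal fiber of $\mc V$, and the coCartesian lifts over active morphisms are assembled from the external products $\boxtimes$ furnished by the lax symmetric monoidal structure of $F$ together with the multiplication encoded in the $(\mc O \otimes \mb E_1)$-structure on $\mc V$. Because this total space is again described diagrammatically in terms of $\mc V^{\otimes}$, a lax $(\mc O \otimes \mb E_1)$-monoidal functor $\mc V^{\otimes} \to \mc W^{\otimes}$---that is, a map of $\infty$-operads over $\mc O \otimes \mb E_1$---induces a map of $\infty$-operads over $\mc O^{\otimes}$, which is the sought lax $\mc O$-monoidal functor; coherence then follows from functoriality of the diagrammatic construction.

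The main obstacle I expect is verifying genuine coCartesianness of $\left(\Cat_\infty^{\mc V}\right)^{\otimes} \to \mc O^{\otimes}$, rather than merely an $\infty$-operad structure: one must check that the pointwise external products really furnish coCartesian lifts over active morphisms, which amounts to showing that $F$ carries the relevant tensor data in $\mc V$ to the monoidal structure maps in $\Cat_\infty$ coherently across all of $\mc O$. Equivalently, one must confirm that the diagrammatic model commutes with the Boardman--Vogt splitting $\mc O \otimes \mb E_1$ up to coherent equivalence. Packaging this---keeping the Segal/completeness conditions stable under the operadic bookkeeping, and confirming that inert morphisms are preserved so that lax functors remain lax---is the delicate and most laborious part of the argument.
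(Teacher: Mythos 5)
Your high-level skeleton agrees with the paper's: encode lax $\mc O$-monoidal functors as maps of $\infty$-operads over $\mc O^\otimes$, produce a functor $\Op_\infty/(\mc O \bvt \mb E_1)^\otimes \to \Op_\infty/\mc O^\otimes$, and obtain Theorem~\ref{thm:mainthm} by specializing to $\mc O = \mb E_n$ via Dunn additivity. The gap is that you never actually construct the object at the center of the argument. Declaring that $(\Cat_\infty^{\mc V})^\otimes \to \mc O^\otimes$ has fiber $\Cat_\infty^{\mc V_X}$ over each color $X$ and that ``the coCartesian lifts over active morphisms are assembled from the external products'' is a description of the desired output, not a definition: in the $\infty$-categorical setting one cannot assemble a coCartesian fibration fiberwise except by straightening a functor $\mc O^\otimes \to \Cat_\infty$ --- which is exactly the route that only records \emph{strong} functoriality, i.e.\ the very problem diagnosed in the introduction --- or by exhibiting an explicit model of the total simplicial set. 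Likewise, your final step, ``a lax $(\mc O \otimes \mb E_1)$-monoidal functor induces a map of $\infty$-operads over $\mc O^\otimes$, and coherence then follows from functoriality of the diagrammatic construction,'' is circular: coherent functoriality of the construction in operad maps over $(\mc O \bvt \mb E_1)^\otimes$ is precisely the statement to be proved, and it is where all the difficulty lives.

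The missing device, which constitutes essentially the entire content of the paper, is to realize the construction as the right adjoint of a Quillen adjunction, so that functoriality in lax (= operad-over-the-base) maps comes for free. Concretely, the paper forms the inert expansion $\mc K^\Gamma = \Gamma \times_{\mc A^\otimes} \Inert(\mc A)$ of a sum-preserving span built from the Gepner--Haugseng data $N\Set^\times \leftarrow N\Lambda \to N\Delta^\op \to \mb E_1^\otimes$ (crossed with $\mc O^\otimes$), and verifies the eight hypotheses of the categorical-pattern criterion \cite[B.4.2]{lurie-higheralgebra} to show that $X \mapsto X \times \mc K^\Gamma$ is left Quillen between the natural model structures on marked simplicial sets over the two bases. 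The right adjoint $R$ then automatically sends fibrant objects (operads over $(\mc O \bvt \mb E_1)^\otimes$) to operads over $\mc O^\otimes$, functorially in operad maps; your two remaining worries --- existence of coCartesian lifts over active morphisms when $\mc V$ is genuinely monoidal, and preservation of inerts so that lax stays lax --- are then settled by an explicit computation of multimorphism spaces in $R(\mc V^\otimes)$ (the paper's Corollary~\ref{cor:morphismspaces} and Proposition~\ref{prop:prodprescocartesian}), not by appeal to the external products $\boxtimes$ of Gepner--Haugseng, which never enter the proof. Without this adjunction (or some comparable mechanism, e.g.\ an $(\infty,2)$-categorical one), your proposal correctly locates the hard points but does not resolve them.
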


A few words about the approach are in order. Given any $\mc O$-monoidal $\infty$-categories $\mc C$ and $\mc D$, expressed as coCartesian fibrations $\mc C^\otimes \to \mc O^\otimes$ and $\mc D^\otimes \to \mc O^\otimes$, a lax $\mc O$-monoidal functor is expediently described as a map $\mc C^\otimes \to \mc D^\otimes$ of $\infty$-operads over $\mc O^\otimes$. The proof of this theorem will therefore follow by exhibiting $\Cat_\infty^{(-)}$ as induced by a right adjoint in a Quillen adjunction, between marked simplicial sets over $\mc O^\otimes$ and marked simplicial sets over the Boardman--Vogt tensor product $(\mc O \bvt \mb E_1)^\otimes$.

In an ideal future, the proof in this paper will become obsolete. The natural home for a result of this type should probably be a systematic study of  compatibility between the Boardman--Vogt tensor product of $\infty$-operads and lax structures. This paper is the result of losing this particular game of ``hot potato'': Theorem~\ref{thm:mainthm} is important in an application, but the author is not currently equipped to develop an $(\infty,2)$-categorical framework. We look forward to having a good laugh about it someday.

Of necessity, this paper is of a rather technical nature. Our proof that $\Cat_\infty^{(-)}$ is part of a Quillen adjunction uses the formalism of categorical patterns developed in \cite[Appendix B]{lurie-higheralgebra}. Many of the components of the proof are upgraded versions of the construction of the $\infty$-operad $\Fun(\mc C, \mc D)^\otimes$ of functors from one $\infty$-operad to another in \cite[2.2.6]{lurie-higheralgebra}, used there to set up the Day convolution. In the case at hand, our proof uses the following setup for enriched $\infty$-categories.
\begin{itemize}
\item We have a marked simplicial set $\Delta^\op$ with a fixed map $\Delta^\op \to \mb E_1^\otimes$ to the associative $\infty$-operad.
\item For any set $X$, we have a marked simplicial set $\Delta^\op_X$ with functor $\Delta^\op_X \to \Delta^\op$ \cite[4.1.1]{gepner-haugseng-enriched}; the fiber over $[n]$ is $X^{n+1}$.
\item The construction $X \mapsto \Delta^\op_X$ is functorial in $X$, and is symmetric monoidal: it takes products of sets to fiber products over $\Delta^\op$.
\item For a map $\mc V^\otimes \to \mb E_1^\otimes$ of $\infty$-operads, a $\mc V$-enriched $\infty$-category is defined to be a pair $(X,s)$ of a set $X$ of ``objects'' and a functor $s\co \Delta^\op_X \to \mc V^\otimes$ giving a commutative diagram of marked simplicial sets:
  \[
    \begin{tikzcd}
      \Delta^\op_X \ar[r,"s",dashed] \ar[d] & \mc V^\otimes \ar[d] \\
      \Delta^\op \ar[r] & \mb E_1^\otimes
    \end{tikzcd}
  \]
  For any $(a,b) \in X \times X$, the image of $(a,b)$ in $\mc V$ represents an enriched $\Hom$-object $\Hom^{\mc V}(a,b)$ of maps from $a$ to $b$. Similarly, maps are appropriately compatible pairs of a map of sets and a natural transformation of functors.
\end{itemize}
For any $\infty$-operad $\mc O^\otimes$, we will upgrade this construction to a functor
\[
  \Cat_\infty^{(-)}\co \Op_\infty / (\mc O \bvt \mb E_1)^\otimes \to \Op_\infty / \mc O^\otimes
\]
which allows us to conclude that $\Cat_\infty^{(-)}$ takes lax $(\mc O \bvt \mb E_1)$-monoidal functors to lax $\mc O$-monoidal functors (see Theorem~\ref{thm:thmmainmain}).

To help these results find their natural generality, we have given a general-purpose result (Theorem~\ref{thm:mainspan}) which produces a ``pointwise'' monoidal structure on certain structures in monoidal $\infty$-categories. This allows us to obtain similar results whenever we have similar identities to those satisfied by the map $\Delta^\op \to \mb E_1^\otimes$ and the functor $\Delta^\op_{(-)}$.

The author would like to thank
Clark Barwick,
Rune Haugseng,
and
Shay Ben Moshe
for helpful conversations related to this material.

\section{Inert maps}

\begin{defn} 
 \label{def:inertcategory}
  Fix an $\infty$-operad $\mc A^\otimes$, and let $\Inert(\mc A^\otimes) \subset \Fun(\Delta^1, \mc A^\otimes)$ be the full sub-$\infty$-category spanned by the inert morphisms.
\end{defn}

\begin{rmk}
  \label{rmk:inertclassifier}
  Evaluation at $1 \in \Delta^1$ determines a functor $\ev_1\co \Inert(\mc A^\otimes) \to \mc A^\otimes$. This is a coCartesian fibration, pulled back from $\Inert(\NFin) \to \NFin$, and classifies a composite functor
  \[
    \mc A^\otimes \to \NFin \too{\mi P} \Set.
  \]
  Here the functor $\mi P$ sends a finite pointed set $S_+$ to the power set $\mi P(S)$, and sends a pointed map $S_+ \to T_+$ to the ``non-basepoint image'' functor $\mi P(S) \to \mi P(T)$. This functor has a right adjoint: the ``inverse image'' functor $\mi P(T) \to \mi P(S)$, and so the functor $\ev_1\co \Inert(\mc A^\otimes) \to \mc A^\otimes$ is also a Cartesian fibration.

  Evaluation at $0$ determines a second functor $\ev_0\co \Inert(\mc A^\otimes) \to \mc A^\otimes$.
\end{rmk}

\begin{defn}
  \label{def:expansion}
  Suppose that $\mc A^\otimes$ is an $\infty$-operad and $p\co \Gamma \to \mc A^\otimes$ is a map of simplicial sets.
  We define the \emph{inert expansion of $\Gamma$} to be the simplicial set
  \[
    \mc K^\Gamma = \mc K \times_{\mc A^\otimes} \Inert(\mc A),
  \]
  formed as the pullback in the following diagram:
  \[
    \begin{tikzcd}
      \mc K^\Gamma \ar[r,"e"] \ar[d,"\pi",swap] & \Gamma \ar[d,"p"] \\
      \Inert(\mc A) \ar[r,"\ev_1",swap] & \mc A^\otimes
    \end{tikzcd}
  \]
  (If $\Gamma$ is understood, we will denote it simply by $\mc K$.)
  The inert expansion has two maps $\mc K^\Gamma \to \mc A^\otimes$: the map $\ev_1 \circ \pi$ in the above diagram, and the map $s = \ev_0 \circ \pi$ which we refer to as the \emph{source} map.
\end{defn}

\begin{prop}
  \label{prop:expansioncocartesian}
  Suppose $p\co \Gamma \to \mc A^\otimes$ is a coCartesian fibration. Then so is the map $s\co \mc K^\Gamma \to \mc A^\otimes$. In particular, $\mc K^\Gamma$ is an $\infty$-category.

  Under these circumstances, a map $f$ in $\mc K^\Gamma$ is $s$-coCartesian if and only if $\pi(f)$ is $(\ev_0)$-coCartesian in $\Inert(\mc A)$ and $e(f)$ is $p$-coCartesian in $\Gamma$.
\end{prop}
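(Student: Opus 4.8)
The plan is to prove the statement by realizing $s$ as a composite of two coCartesian fibrations via the factorization $s = \ev_0 \circ \pi$, and then reading off the characterization of $s$-coCartesian edges from the standard composition results for coCartesian fibrations \cite[2.4.1.3, 2.4.2.3]{lurie-higheralgebra}. Concretely, I would establish two facts: first, that $\pi\co \mc K^\Gamma \to \Inert(\mc A)$ is a coCartesian fibration whose coCartesian edges are detected by $e$; and second, that $\ev_0\co \Inert(\mc A) \to \mc A^\otimes$ is itself a coCartesian fibration. Granting both, an edge $f$ of $\mc K^\Gamma$ is $s$-coCartesian if and only if it is $\pi$-coCartesian and $\pi(f)$ is $\ev_0$-coCartesian, which unwinds to exactly the stated conditions. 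Since $\mc A^\otimes$ is an $\infty$-category, the total space of the coCartesian fibration $s$ is then automatically an $\infty$-category.

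The first fact is formal. By Definition~\ref{def:expansion}, $\mc K^\Gamma = \Gamma \times_{\mc A^\otimes} \Inert(\mc A)$ is the base change of $p$ along $\ev_1$, so $\pi$ is a coCartesian fibration because $p$ is, coCartesian fibrations being stable under pullback. Moreover, in such a base change an edge is $\pi$-coCartesian precisely when its image under the other projection $e$ is $p$-coCartesian; this gives the condition ``$e(f)$ is $p$-coCartesian'' and simultaneously identifies the $\pi$-coCartesian edges needed to invoke the composition result.

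The substantive point is the second fact. Remark~\ref{rmk:inertclassifier} only records that $\ev_1$ is a coCartesian (and Cartesian) fibration, classified by $\mi P$; I would prove the companion statement for $\ev_0$. As with $\ev_1$, the map $\ev_0$ is pulled back from $\Inert(\NFin) \to \NFin$, so it suffices to treat the universal case $\mc A^\otimes = \NFin$ and then base change. There one checks directly that $\ev_0$ admits coCartesian lifts: given an inert morphism $\alpha$ and a morphism $\beta$ out of $\ev_0(\alpha)$ lying over a map $f\co S_+ \to T_+$ of finite pointed sets, the lift is obtained by transporting the subset classifying $\alpha$ forward along the non-basepoint image of $f$, which is exactly the covariant functoriality of $\mi P$; the induced map on the $\ev_1$-side is then precisely the edge that $\pi$ will lift into $\Gamma$. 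Identifying these coCartesian edges correctly, and verifying their universal property, is the main obstacle: the variance here is delicate, since $\ev_0$ and $\ev_1$ pair the covariant image and contravariant inverse-image structures on power sets, and one must confirm that the edges produced are genuinely $\ev_0$-coCartesian rather than merely locally so.

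Finally, I would assemble the two facts. The composition criterion \cite[2.4.1.3]{lurie-higheralgebra} applied to $s = \ev_0 \circ \pi$ yields that $s$ is a coCartesian fibration and that $f$ is $s$-coCartesian if and only if $f$ is $\pi$-coCartesian and $\pi(f)$ is $\ev_0$-coCartesian. Combining this with the identification of $\pi$-coCartesian edges from the base change, the criterion becomes exactly the assertion that $f$ is $s$-coCartesian if and only if $e(f)$ is $p$-coCartesian and $\pi(f)$ is $\ev_0$-coCartesian, which is the claim.
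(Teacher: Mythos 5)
Your skeleton is the same as the paper's: the paper's entire proof is that $\pi$ is a coCartesian fibration by pullback stability, followed by the composition theorem \cite[2.4.2.3]{lurie-htt} applied to $s = \ev_0 \circ \pi$, which yields exactly the stated characterization of $s$-coCartesian edges. (Note your citations point to \cite{lurie-higheralgebra}; the results 2.4.1.3 and 2.4.2.3 you want are in \cite{lurie-htt}.) You are also right that this silently requires $\ev_0\co \Inert(\mc A) \to \mc A^\otimes$ to be a coCartesian fibration, and that this is the substantive point: Remark~\ref{rmk:inertclassifier} only records the fibration properties of $\ev_1$, so making this step explicit is more careful than the paper itself.

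However, your proposed proof of that substantive step fails at exactly the place you flag as the main obstacle, because you have identified the wrong edges. The $\ev_0$-coCartesian transport in $\Inert(\NFin)$ along $f\co S_+ \to T_+$ is \emph{not} the non-basepoint direct image $A \mapsto f(A)$ (that covariant functoriality of $\mi P$ is what the paper attaches to $\ev_1$); in general the direct image does not even define an edge of $\Inert(\NFin)$ lying over $f$. Writing $g_A\co S_+ \to A_+$ for the inert map with support $A \subseteq S$, a morphism $g_A \to g_B$ over $f$ is a pointed map $b\co A_+ \to B_+$ with $g_B \circ f = b \circ g_A$, and evaluating on $s \notin A$ shows such a $b$ exists (then uniquely) if and only if $B \cap f(S \setminus A) = \emptyset$. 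Concretely, take $S = \{1,2\}$, $A = \{1\}$, and $f$ collapsing both elements to $1 \in T = \{1\}$: for $B = f(A) = \{1\}$ the square cannot commute, since $2 \notin A$ forces $g_B(f(2)) = \ast$ while $f(2) = 1 \in B$. The actual $\ev_0$-coCartesian lift lands at the \emph{largest admissible} subset $B = T \setminus f(S \setminus A) = \{\,t \in T : f^{-1}(t) \subseteq A\,\}$, i.e.\ the \emph{right} adjoint $\forall_f$ of the inverse-image functor rather than its left adjoint $\exists_f$ (in the example, $B = \emptyset$). This is consistent with the fibers of $\ev_0$ being the posets $\mi P(S)^{op}$, which is how they are used in Proposition~\ref{prop:generalexpansionfiber}. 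With this correction your program does work: these locally coCartesian lifts exist, the transports compose ($\forall_{f' \circ f} = \forall_{f'} \circ \forall_f$), so locally coCartesian edges are closed under composition and $\ev_0$ is genuinely coCartesian; also, the reduction to the universal case is legitimate for $\ev_0$ (the comparison $\Inert(\mc A) \simeq \mc A^\otimes \times_{\NFin} \Inert(\NFin)$ holds because inert lifts with prescribed source are coCartesian lifts, hence essentially unique, though this deserves a line of proof). But as written, the lifts you construct do not exist, so the verification of their universal property cannot get started.
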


\begin{proof}
  CoCartesian fibrations are stable under pullback, and hence $\pi\co \mc K^\Gamma \to \Inert(A)$ is a coCartesian fibration. By \cite[2.4.2.3]{lurie-htt}, the composite $s\co \mc K \to \Inert(A) \to A^\otimes$ is also a coCartesian fibration with the stated coCartesian morphisms.
\end{proof}

\begin{cor}
  \label{cor:expansionflat}
  If the map $p$ is a coCartesian fibration, then the map $s$ is a flat categorical fibration.
\end{cor}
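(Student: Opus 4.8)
The plan is to deduce this immediately from Proposition~\ref{prop:expansioncocartesian} together with the general principle that any coCartesian fibration over an $\infty$-category is automatically a flat categorical fibration. By Proposition~\ref{prop:expansioncocartesian} the map $s\co \mc K^\Gamma \to \mc A^\otimes$ is a coCartesian fibration, and $\mc A^\otimes$ is an $\infty$-category (indeed an $\infty$-operad), so it suffices to establish the two properties in question for an arbitrary coCartesian fibration $q\co X \to S$ with $S$ an $\infty$-category.

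That $q$ is a categorical fibration amounts to checking that it is an isofibration: it is an inner fibration by definition, and given an equivalence $\alpha\co q(x) \to y$ in $S$, a coCartesian lift $\wt\alpha\co x \to \wt y$ exists and is itself an equivalence (a coCartesian morphism lying over an equivalence is an equivalence), which supplies the required lift. Hence $q$ is a fibration for the Joyal model structure.

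For flatness, I would verify the criterion of \cite[\S B.3]{lurie-higheralgebra}: given a $2$-simplex $x \too{f} y \too{g} z$ in $S$ and an edge $\rb\co \wt x \to \wt z$ of $X$ lying over the composite $g \circ f$, the space of factorizations of $\rb$ through some lift $\wt y$ of $y$, compatibly with the $2$-simplex, must be weakly contractible. Choosing the coCartesian lift $\wt f\co \wt x \to \wt y$ of $f$ and invoking its universal property, such factorizations are controlled by edges $\wt y \to \wt z$ lying over $g$ whose composite with $\wt f$ recovers $\rb$, and this space is contractible; so the criterion holds. The main point to get right here is not any calculation but simply locating the precise form of the flatness criterion in \cite{lurie-higheralgebra}; the verification itself is forced by the explicit description of the $s$-coCartesian morphisms supplied by Proposition~\ref{prop:expansioncocartesian}.
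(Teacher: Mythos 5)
Your proposal is correct and is essentially the paper's proof: the paper likewise combines Proposition~\ref{prop:expansioncocartesian} with the general fact that coCartesian fibrations are flat categorical fibrations, except that it simply cites \cite[B.3.11]{lurie-higheralgebra} for that fact rather than reproving it. One caution about your sketch of the flatness verification: the category whose weak contractibility is required consists of \emph{all} factorizations of $\rb$ through \emph{any} lift of $y$ (with an arbitrary, not necessarily coCartesian, first edge), not merely the fillings whose first edge is the coCartesian lift $\wt f$; the argument is completed by observing that the coCartesian factorization is an initial object of this factorization category, which is what yields weak contractibility.
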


\begin{proof}
  CoCartesian fibrations are flat by \cite[B.3.11]{lurie-higheralgebra}.
\end{proof}

\begin{prop}
  \label{prop:expansioncartesian}
  Suppose $p\co \Gamma \to \mc A^\otimes$ is a coCartesian fibration and $\gamma\co \Delta^1 \to \mc A^\otimes$ is an inert morphism. If we define $\mc K^\Gamma|_\gamma = \mc K^\Gamma \times_{\mc A^\otimes} \Delta^1$, then the restriction
  \[
    s|_\gamma\co \mc K^\Gamma|_\gamma \to \Delta^1
  \]
  is a Cartesian fibration.

  Under these circumstances, a map $f$ in $\mc K^\Gamma|_\gamma$ is $(s|_\gamma)$-Cartesian if and only if both $\ev_1\pi (f)$ and $e(f)$ are equivalences.
\end{prop}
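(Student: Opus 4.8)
The plan is to analyze $s|_\gamma$ by separating the two factors of the pullback defining $\mc K^\Gamma$ and using the inertness of $\gamma$ in an essential way. Since the outermost pullback forming $\mc K^\Gamma|_\gamma$ is taken along $s = \ev_0\circ\pi$ and $\pi$ is itself a projection, we may rewrite
\[
  \mc K^\Gamma|_\gamma \;\simeq\; \mc I_\gamma \times_{\ev_1,\, \mc A^\otimes,\, p} \Gamma,
  \qquad
  \mc I_\gamma \;:=\; \Inert(\mc A) \times_{\ev_0,\, \mc A^\otimes,\, \gamma} \Delta^1 ,
\]
where $\mc I_\gamma \to \Delta^1$ (call it $s'$) is the source projection, $\mc I_\gamma \to \mc A^\otimes$ is $\ev_1$, and $\pi_\gamma \co \mc K^\Gamma|_\gamma \to \mc I_\gamma$ is the first projection. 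Under this identification $s|_\gamma = s' \circ \pi_\gamma$, the invariant $\ev_1\pi(f)$ is the $\ev_1$-component of $\pi_\gamma(f)$, and $e(f)$ is the $\Gamma$-component of $f$. Because $p$ is a categorical fibration, this is a homotopy pullback and computes mapping spaces as homotopy fibre products, which is what I will exploit. Note also that $s|_\gamma$ is an inner fibration, being a base change along $\gamma$ of the coCartesian (hence categorical) fibration $s$ of Proposition~\ref{prop:expansioncocartesian}.

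First I would treat $s'\co \mc I_\gamma \to \Delta^1$. Inside the ambient category $\Fun(\Delta^1,\mc A^\otimes)$ the evaluation $\ev_0$ is a Cartesian fibration whose Cartesian edges are exactly the squares whose $\ev_1$-edge is an equivalence: a Cartesian lift of a morphism $\alpha$ ending at an object $\delta'\co x_1\to y$ is given by the square with top edge $\delta'\circ\alpha$, bottom edge $\delta'$, and right edge $\mathrm{id}_y$ (see \cite{lurie-htt}). The crux of the whole argument is that this is exactly where inertness enters: for $\alpha=\gamma$ the top edge $\delta'\circ\gamma$ is a composite of inert morphisms, hence inert, so the lift stays inside the full subcategory $\Inert(\mc A)\subseteq\Fun(\Delta^1,\mc A^\otimes)$. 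As $\Inert(\mc A)$ is full, a morphism of $\Inert(\mc A)$ is $\ev_0$-Cartesian there iff it is $\ev_0$-Cartesian in $\Fun(\Delta^1,\mc A^\otimes)$. Pulling back along $\gamma$, I conclude that $s'$ is a Cartesian fibration whose Cartesian edges are precisely those with invertible $\ev_1$-component; explicitly, the Cartesian lift of $0\to1$ ending at $\delta'\co a'\to c'$ is the edge $\bar g$ from $\delta'\circ\gamma\co a\to c'$ to $\delta'$ with $\ev_1(\bar g)=\mathrm{id}_{c'}$.

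It remains to promote this to $\mc K^\Gamma|_\gamma$. Given an object $y=(\delta'\co a'\to c',\, x'\in\Gamma_{c'})$ over $1$, I would define the candidate Cartesian lift $\bar f = (\bar g,\mathrm{id}_{x'})\co \wt y \to y$ with $\wt y=(\delta'\circ\gamma,\, x')$; this is legitimate because $\ev_1(\bar g)=\mathrm{id}_{c'}$, so $\mathrm{id}_{x'}$ is an edge of $\Gamma$ lying over it, and by construction both $\ev_1\pi(\bar f)$ and $e(\bar f)$ are equivalences. To see $\bar f$ is $s|_\gamma$-Cartesian I use the mapping-space criterion: for any $z=(\epsilon\co a\to c,\, w\in\Gamma_c)$ over $0$, the homotopy-pullback formula gives
\[
  \Map_{\mc K^\Gamma|_\gamma}(z, t)
  \;\simeq\;
  \Map_{\mc I_\gamma}(\epsilon,\, \pi_\gamma t)
  \;\times_{\Map_{\mc A^\otimes}(c,\, c')}\;
  \Map_\Gamma(w,\, x'),
  \qquad t \in \{\wt y,\, y\},
\]
and composition with $\bar f$ acts as composition with $\bar g$ on the first factor, as the identity on $\Map_\Gamma(w,x')$, and (since $\ev_1(\bar g)=\mathrm{id}$) as the identity on the base $\Map_{\mc A^\otimes}(c,c')$. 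As $\bar g$ is $s'$-Cartesian and the base is $\Delta^1$, composition with $\bar g$ is an equivalence $\Map_{\mc I_\gamma}(\epsilon,\delta'\gamma)\to\Map_{\mc I_\gamma}(\epsilon,\delta')$; since homotopy pullbacks preserve equivalences, the induced map on fibre products is an equivalence, so $\bar f$ is Cartesian. Existence of these lifts together with the inner fibration property shows $s|_\gamma$ is a Cartesian fibration. The same computation, run for an arbitrary $f$, shows directly that $f$ is Cartesian whenever $\ev_1\pi(f)$ and $e(f)$ are equivalences (these are the only two facts it used), and the converse follows from uniqueness of Cartesian lifts, since any Cartesian $f$ is equivalent to the lift $\bar f$ above and hence shares its invariants. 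The main obstacle is the bookkeeping around the pullback decomposition and the mapping-space identity; the one genuinely mathematical input is the observation that inertness of $\gamma$ is exactly what keeps the ambient $\ev_0$-Cartesian lift inside $\Inert(\mc A)$.
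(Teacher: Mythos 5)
Your proof is correct, and its engine is the same as the paper's: the candidate Cartesian lift is built by precomposing the $\Inert(\mc A)$-component with $\gamma$ (legitimate precisely because inert morphisms are closed under composition) and taking identities in the $\ev_1$- and $\Gamma$-directions, and Cartesianness is checked by computing mapping spaces of a pullback along a categorical fibration as homotopy fiber products. Where you genuinely differ is in the decomposition and in one key citation. You regroup the pullback as $\mc K^\Gamma|_\gamma \cong \bigl(\Inert(\mc A)\times_{\ev_0,\,\mc A^\otimes,\,\gamma}\Delta^1\bigr)\times_{\mc A^\otimes}\Gamma$ and first settle the left factor alone, quoting the standard fact that $\ev_0\co\Fun(\Delta^1,\mc A^\otimes)\to\mc A^\otimes$ is a Cartesian fibration whose Cartesian edges are exactly the squares with invertible $\ev_1$-edge, then transferring it to $\Inert(\mc A)$ by fullness and to $\mc I_\gamma$ by pullback-stability of Cartesian edges; only afterwards do you bring in $\Gamma$, at which point the base $\Delta^1$ collapses the Cartesian criterion to a single mapping-space equivalence. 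The paper never isolates this factor: it works in $\mc K^\Gamma$ itself, constructs for an \emph{arbitrary} inert morphism $\alpha$ of $\mc A^\otimes$ the lift $(h\alpha,W)\to(h,W)$, proves it is $s$-Cartesian in all of $\mc K^\Gamma$ by a two-row homotopy-pullback comparison, and restricts over $\gamma$ only in the last paragraph. Your route buys brevity and citability (the arrow-category fact is off-the-shelf, and you never need Cartesianness against test objects outside the fiber over $0$); the paper's route buys a strictly stronger intermediate fact --- its lifts are $s$-Cartesian globally, not merely in the restriction --- which is a natural companion to the ``locally $s$-Cartesian'' hypotheses appearing in Propositions~\ref{prop:trianglecart} and~\ref{prop:trianglecocart}, even though only the stated local version is invoked there. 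Two loose ends in your write-up are harmless but worth tightening: the ``iff'' in your fullness claim is only valid (and only needed) in the direction ``Cartesian in the ambient arrow category implies Cartesian in $\Inert(\mc A)$'', and your final characterization only treats edges over $0\to 1$, leaving implicit the degenerate case of edges over $\mathrm{id}_0$ or $\mathrm{id}_1$, where Cartesian $=$ equivalence $=$ all three components invertible.
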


\begin{proof}
  Suppose that for $i \in \{1,2\}$ we have inert maps $g_i\co U_i \to \overline U_i$ in $\mc A^\otimes$ and $Z_i \in \Gamma_{\overline U_i}$. Then the space of maps $(g_1, Z_1) \to (g_2, Z_2)$ in $\mc K^\Gamma$ is computed as a natural iterated (homotopy) pullback:
  \begin{equation}
    \label{eq:cartesianpullback}
    \begin{tikzcd}
      \Map_{\mc K^\Gamma}((g_1,Z_1), (g_2,Z_2)) \ar[r] \ar[d] &
      \Map_{\Inert(\mc A)}(g_1, g_2) \ar[r] \ar[d] &
      \Map_{\mc A^\otimes}(U_1, U_2) \ar[d] \\
      \Map_{\Gamma} (Z_1, Z_2) \ar[r] &
      \Map_{\mc A^\otimes}(\overline U_1, \overline U_2) \ar[r] &
      \Map_{\mc A^\otimes}(U_1, \overline U_2)
    \end{tikzcd}
  \end{equation}
  Now take any inert map $\alpha\co X \to Y$ in $\mc A^\otimes$ together with a lift of $Y$ to $\mc K^\Gamma$ along $s$, in the form of a pair $(h, W)$ of an inert morphism $h\co Y \to \overline Y$ and an object $W$ in the fiber $\Gamma_{\overline Y} = p^{-1} \overline Y$. The composite $h\alpha\co X \to Y \to \overline Y$ is also inert, and so $(h\alpha, W)$ is a lift of $X$ to $\mc K^\Gamma$ with a map $\tilde \alpha\co (h\alpha,W) \to (h,W)$ determined by the commutative diagram
  \[
    \begin{tikzcd}
      X \ar[r,"\gamma"] \ar[d,"h\gamma",swap] & Y \ar[d,"h"] \\
      \overline Y \ar[r,equal] & \overline Y
    \end{tikzcd}
  \]
  in $\Inert(\mc A)$. We wish to show that this map $\tilde \alpha$ is 
$s$-Cartesian.
  
  For any $(g,Z)$ in $\mc K^\Gamma$ consisting of an inert morphism $g\co U \to \overline U$ in $\mc A^\otimes$ and $Z \in \Gamma_{\overline U}$, we consider the following diagram:
  \[
    \begin{tikzcd}
      \Map_\Gamma(Z, W) \ar[r] \ar[d] &
      \Map_{\mc A^\otimes}(U, \overline Y) \ar[d] &
      \Map_{\mc A^\otimes}(U, X) \ar[d] \ar[l]\\
      \Map_\Gamma(Z, W) \ar[r] &
      \Map_{\mc A^\otimes}(U, \overline Y) &
      \Map_{\mc A^\otimes}(U, Y) \ar[l]
    \end{tikzcd}
  \]
  The vertical maps in the left-hand square are equivalences, so the left-hand square is a homotopy pullback square. Therefore, applying Equation~\eqref{eq:cartesianpullback} to identify the homotopy pullbacks in each row, we find that we get a homotopy pullback square
  \[
    \begin{tikzcd}
      \Map_{\mc K^\Gamma}((g,Z), (h\alpha, W)) \ar[r] \ar[d] &
      \Map_{\mc A^\otimes}(U, X) \ar[d]\\
      \Map_{\mc K^\Gamma}((g,Z), (h, W)) \ar[r] &
      \Map_{\mc A^\otimes}(U, Y)
    \end{tikzcd}
  \]
  Because $(g,Z)$ was arbitrary, this shows that the map $(h\alpha, W) \to (h,W)$ is $s$-Cartesian by \cite[2.4.1.10]{lurie-htt}.

  Given any inert edge $\gamma\co \Delta^1 \to \mc A^\otimes$, any map $\alpha$ in $\Delta^1$ has inert image in $\mc A^\otimes$. Then $\alpha$ has $s$-Cartesian lifts as above, which lie in $\mc K^\Gamma|_\gamma$; these lifts are automatically also $(s|_\gamma)$-Cartesian. Moreover, a general map $f$ is $(s|_\gamma)$-Cartesian if and only if it is equivalent to one of these, which is true if and only if $\ev_0(f)$ and $e(f)$ are equivalences.
\end{proof}

\section{Markings}

\begin{defn}
  \label{def:compatiblemarking}
  Suppose $p\co \Gamma \to \mc A^\otimes$ is a coCartesian fibration. A \emph{compatible marking} is a marking on the simplicial set $\Gamma$ such that:
  \begin{itemize}
  \item the map $p$ takes marked edges to inert morphisms,
  \item $p$-coCartesian lifts of inert morphisms are marked, 
  \item marked edges are closed under composition, and
  \item for any inert $\gamma\co U \to V$, the induced functor $\gamma_!\co \Gamma_U \to \Gamma_V$ preserves marked edges.
  \end{itemize}
\end{defn}

\begin{rmk}
  In particular, note that equivalences are automatically marked.
\end{rmk}

\begin{defn}
  \label{def:associatedmarking}
  Suppose $p\co \Gamma \to \mc A^\otimes$ is a coCartesian fibration with a compatible marking, and form the inert expansion as the pullback:
  \[
    \begin{tikzcd}
      \mc K^\Gamma \ar[r,"e"] \ar[d,"\pi",swap] & \Gamma \ar[d,"p"] \\
      \Inert(\mc A) \ar[r,"\ev_1",swap] & \mc A^\otimes
    \end{tikzcd}
  \]
The \emph{associated marking} on the inert expansion $\mc K^\Gamma$ is the set of edges $f$ of $\mc K^\Gamma$ such that:
  \begin{itemize}
  \item the image $s(f) = \ev_0 \pi(f)$ is inert in $\mc A^\otimes$, and
  \item the image $e(f)$ is marked in $\Gamma$.
  \end{itemize}
  This equips $\mc K^\Gamma$ with the structure of a marked simplicial set, and the map $s$ is a map of marked simplicial sets.
\end{defn}


\begin{prop}
  \label{prop:markedclosure}
  Suppose $p\co \Gamma \to \mc A^\otimes$ is a coCartesian fibration with a compatible marking. Then, in the associated marking on $\mc K^\Gamma$, marked edges are closed under equivalences and composition.
\end{prop}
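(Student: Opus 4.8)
The plan is to reduce both closure properties to the corresponding properties in the two targets of the projections $e\co \mc K^\Gamma \to \Gamma$ and $s = \ev_0 \pi \co \mc K^\Gamma \to \mc A^\otimes$. By Definition~\ref{def:associatedmarking}, an edge $f$ of $\mc K^\Gamma$ is marked precisely when the two conditions ``$e(f)$ is marked in $\Gamma$'' and ``$s(f)$ is inert in $\mc A^\otimes$'' hold simultaneously. Since $e$ and $s$ are maps of simplicial sets (the first is the projection from the defining pullback, the second is that projection composed with $\ev_0$), they carry $2$-simplices to $2$-simplices and commute with the face maps; moreover, by Proposition~\ref{prop:expansioncocartesian} the source $\mc K^\Gamma$ is an $\infty$-category, so $e$ and $s$ are functors and in particular preserve equivalences. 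It therefore suffices to observe that each of the two defining conditions is separately stable under equivalences and under composition.

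For equivalences, if $f$ is an equivalence in $\mc K^\Gamma$ then $e(f)$ and $s(f)$ are equivalences in $\Gamma$ and $\mc A^\otimes$ respectively. Every equivalence in $\Gamma$ is marked by the Remark following Definition~\ref{def:compatiblemarking}, and every equivalence in $\mc A^\otimes$ is inert; hence $f$ is marked. For composition, given a $2$-simplex $\tau$ of $\mc K^\Gamma$ whose outer faces $d_0 \tau$ and $d_2 \tau$ are marked, I would apply $e$ and $s$ separately. The image $e(\tau)$ is a $2$-simplex of $\Gamma$ whose outer faces $e(d_0\tau)$ and $e(d_2\tau)$ are marked, so its middle face $e(d_1\tau)$ is marked because the compatible marking on $\Gamma$ is closed under composition (Definition~\ref{def:compatiblemarking}). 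Likewise $s(\tau)$ is a $2$-simplex of $\mc A^\otimes$ with inert outer faces, so its middle face $s(d_1\tau)$ is inert because inert morphisms of an $\infty$-operad are closed under composition. Both defining conditions thus hold for the long edge $d_1\tau$, so the composite is marked.

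The argument is essentially bookkeeping, and I expect no serious obstacle; the one point that warrants care is that inert morphisms do \emph{not} satisfy two-out-of-three, so I would only claim closure under honest composition (two marked edges yielding a marked composite) and containment of equivalences, never a cancellation property. If one further wants the associated marking to be detected on the homotopy category, closure under the homotopy relation then follows formally from these two facts: a homotopy $f \simeq g$ between edges with common endpoints is witnessed by a $2$-simplex one of whose faces is an identity, which is an equivalence and hence marked, so closure under composition upgrades ``$f$ marked'' to ``$g$ marked.''
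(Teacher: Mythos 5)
Your proof is correct and takes essentially the same approach as the paper's: an edge of $\mc K^\Gamma$ is marked exactly when its images under $e$ and $s$ satisfy conditions that are themselves closed under composition and equivalences, namely markedness in $\Gamma$ (by compatibility) and inertness in $\mc A^\otimes$ (Lurie's HTT 2.4.1.5, 2.4.1.7). Your closing paragraph, deducing homotopy-invariance of the marking from containment of equivalences plus closure under composition, makes explicit a point the paper's two-sentence proof leaves implicit.
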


\begin{proof}
  The maps $s$ and $e$ preserve composition and equivalences. Therefore, it suffices to observe that inert morphisms in the $\infty$-operad $\mc A^\otimes$ are closed under composition and equivalences by \cite[2.4.1.5, 2.4.1.7]{lurie-htt}, and that marked edges in $\Gamma$ are closed under composition and equivalences by assumption.
\end{proof}

\begin{prop}
  \label{prop:trianglecart}
  Let $p\co \Gamma \to \mc A^\otimes$ be a coCartesian fibration with a compatible marking. Suppose we are given a commutative diagram
  \[
    \begin{tikzcd}
      & Y \ar[dr, "g"]\\
      X \ar[ur, "f"] \ar[rr,"h"] && Z
    \end{tikzcd}
  \]
  in $\mc K^\Gamma$, where $g$ is locally $s$-Cartesian, $s(g)$ is inert, and $s(f)$ is an equivalence. Then $f$ is marked if and only if $h$ is marked.
\end{prop}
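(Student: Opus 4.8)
The plan is to push the given triangle forward along the two structure maps $s$ and $e$ and then read off markedness directly from Definition~\ref{def:associatedmarking}. The commutative triangle is a $2$-simplex $\sigma\co \Delta^2 \to \mc K^\Gamma$ with $h = g \circ f$; applying the functors $s = \ev_0\pi$ and $e$ to $\sigma$ yields $2$-simplices in $\mc A^\otimes$ and in $\Gamma$ exhibiting $s(h) \simeq s(g)\circ s(f)$ and $e(h) \simeq e(g)\circ e(f)$. The whole argument is then a matter of organizing the two clauses in the associated marking around these.

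First I would dispose of the source conditions. Since $s(f)$ is an equivalence it is inert, and since $s(g)$ is inert while inert morphisms are closed under composition and under equivalence, the composite $s(h) = s(g)\circ s(f)$ is inert as well. Hence the first clause of Definition~\ref{def:associatedmarking} holds automatically for both $f$ and $h$, and the proposition reduces to the assertion that $e(f)$ is marked in $\Gamma$ if and only if $e(h)$ is marked in $\Gamma$.

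Next I would identify $e(g)$. Because $s(g)$ is inert, the edge $\gamma = s(g)$ is an inert morphism of $\mc A^\otimes$, and the hypothesis that $g$ is locally $s$-Cartesian says exactly that $g$ is $(s|_\gamma)$-Cartesian for this $\gamma$. Proposition~\ref{prop:expansioncartesian} then forces both $\ev_1\pi(g)$ and $e(g)$ to be equivalences; in particular $e(g)$ is an equivalence in $\Gamma$. What remains is therefore a two-out-of-three statement inside the $2$-simplex $e(\sigma)$: one of whose edges, namely $e(g)$, is an equivalence, and I must compare markedness of $e(f)$ with markedness of the composite $e(h)$.

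Finally I would invoke the closure properties of the compatible marking on $\Gamma$ from Definition~\ref{def:compatiblemarking}: equivalences are marked and marked edges are closed under composition. One direction is immediate, since $e(h) \simeq e(g)\circ e(f)$ is then a composite of marked edges. For the converse I would post-compose with an inverse equivalence $\overline{e(g)}$ (itself marked): the composite $\overline{e(g)}\circ e(h)$ is marked by composition-closure, and it is equivalent to $e(f)$. The only real obstacle is that this last step needs marked edges to be closed under equivalence of edges, which is not listed among the axioms; but it follows formally from them, since a homotopy between two edges can be encoded as a $2$-simplex one of whose faces is an identity, and applying composition-closure to that simplex transports markedness across the homotopy. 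With closure under equivalence in hand, $e(f)$ is marked, which by the reduction in the second paragraph is exactly the claim.
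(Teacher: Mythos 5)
Your proof is correct and follows essentially the same route as the paper's: deduce inertness of $s(h)$ from closure of inert morphisms under composition and equivalences, use Proposition~\ref{prop:expansioncartesian} to see that $e(g)$ is an equivalence since $g$ is locally $s$-Cartesian, and conclude via the closure properties of the compatible marking on $\Gamma$. The only difference is that you explicitly justify closure of marked edges under equivalence of parallel edges (which the paper's one-line appeal to ``compatibility of the marking'' leaves implicit, and which, as you note, does follow formally from composition-closure and the markedness of equivalences).
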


\begin{proof}
  The assumptions imply that $s(f)$ and $s(g)$ are inert, so $s(h)$ is inert as well. By Proposition~\ref{prop:expansioncartesian}, since $g$ is locally $s$-Cartesian, $e(g)$ is an equivalence in $\Gamma$. By compatibility of the marking on $\Gamma$, $e(f)$ is marked if and only if $e(h)$ is marked.
\end{proof}

\begin{prop}
  \label{prop:trianglecocart}
  Let $p\co \Gamma \to \mc A^\otimes$ be a coCartesian fibration with a compatible marking. Suppose we are given a commutative diagram
  \[
    \begin{tikzcd}
      & Y \ar[dr, "g"]\\
      X \ar[ur, "f"] \ar[rr,"h"] && Z
    \end{tikzcd}
  \]
  in $\mc K^\Gamma|_\gamma$ for some inert edge $\gamma$ in $\mc A^\otimes$, where $f$ is $(s|_\gamma)$-coCartesian and $(s|_\gamma)(g)$ is an equivalence. Then $g$ is marked if and only if $h$ is marked.
\end{prop}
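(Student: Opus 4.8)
The plan is first to reduce the statement to a question purely inside $\Gamma$. Since $\gamma$ is inert, the source map sends \emph{every} edge of $\mc K^\Gamma|_\gamma$ to an inert morphism of $\mc A^\otimes$: any such edge lies over an edge of $\Delta^1$, which $\gamma$ carries either to $\gamma$ itself or to a degenerate edge, and both are inert. Hence the ``inert source'' clause in the associated marking (Definition~\ref{def:associatedmarking}) is automatic on $\mc K^\Gamma|_\gamma$, so an edge there is marked if and only if its image under $e$ is marked in $\Gamma$. In particular $g$ is marked iff $e(g)$ is marked, $h$ is marked iff $e(h)$ is marked, and since $e(h) = e(g)\circ e(f)$ it suffices to show that $e(g)$ is marked in $\Gamma$ if and only if $e(g)\circ e(f)$ is.

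Next I would analyze $f$. Because $f$ is $(s|_\gamma)$-coCartesian, Proposition~\ref{prop:expansioncocartesian} shows that $e(f)$ is $p$-coCartesian and $\pi(f)$ is $(\ev_0)$-coCartesian. The key point is that $\beta := p(e(f)) = \ev_1\pi(f)$ is inert. Indeed, in the description of $\ev_0$ through the power-set functor of Remark~\ref{rmk:inertclassifier}, the $(\ev_0)$-coCartesian edge $\pi(f)$ is obtained by restricting the inert morphism $s(f)$ to the subset of colours picking out $\pi(X)$; a restriction of an inert morphism to a subset of its colours is again inert, so $\beta$ is inert. Therefore $e(f)$ is a $p$-coCartesian lift of an inert morphism, and by the compatibility conditions of Definition~\ref{def:compatiblemarking} it is itself marked.

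It then remains to prove the equivalence $e(g)$ marked $\iff$ $e(g)\circ e(f)$ marked. The forward direction is immediate: $e(g)$ and $e(f)$ marked give $e(h)=e(g)\circ e(f)$ marked by closure under composition. For the converse I would factor $e(g)$ through its coCartesian lift as $e(g)=\theta\circ c_\delta$, where $\delta = p(e(g))$, $c_\delta$ is the $p$-coCartesian lift of $\delta$ (marked, by compatibility), and $\theta$ is a map in the fibre over the common target; then, using that composites of coCartesian edges over composable inert morphisms are coCartesian \cite[2.4.1.7]{lurie-htt}, I would identify $c_\delta\circ e(f)$ with the coCartesian lift of $\delta\circ\beta$, so that $e(h)=\theta\circ(c_\delta\circ e(f))$ exhibits the \emph{same} fibre-component $\theta$. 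The statement $e(g)$ marked $\iff$ $e(h)$ marked then comes down to the single assertion that $\theta$ is marked if and only if $e(h)=\theta\circ(c_\delta\circ e(f))$ is.

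I expect this last right-cancellation to be the main obstacle, and it is exactly the feature that distinguishes this proposition from its Cartesian counterpart. In Proposition~\ref{prop:trianglecart} the hypothesis forces $e(g)$ to be an \emph{equivalence}, which is invertible, so cancellation is automatic; here the corresponding edge $e(f)$ is only $p$-coCartesian over an inert morphism, not invertible, so one must genuinely cancel a marked coCartesian edge on the right. I would establish this by exploiting the coCartesian structure rather than invertibility: the universal property of the coCartesian lift $c_\delta\circ e(f)$ over the inert morphism $\delta\circ\beta$ lets one transport the marked edge $e(h)$ to its fibre-component $\theta$, after which closure under composition (Definition~\ref{def:compatiblemarking}) recovers $e(g)=\theta\circ c_\delta$ as marked. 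Isolating and justifying this cancellation — reflecting markedness across the marked coCartesian edge $e(f)$ — is the crux of the argument.
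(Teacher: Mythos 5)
Your opening reduction is correct and coincides with the paper's first moves: over an inert $\gamma$ every edge of $\mc K^\Gamma|_\gamma$ has inert image under $s$, so markedness is detected entirely by $e$, and the implication ``$g$ marked $\Rightarrow h$ marked'' is just closure under composition. Your verification that $\beta = p(e(f))$ is inert, so that $e(f)$ is a marked coCartesian lift of an inert morphism, is also right, and is in fact a point the paper leaves implicit. The gap is in the converse, exactly where you locate the crux. Your plan is to factor $e(g) \simeq \theta \circ c_\delta$ and $e(h) \simeq \theta \circ (c_\delta \circ e(f))$ and then ``transport'' markedness from $e(h)$ to the fibre component $\theta$ via the universal property of the coCartesian edge $c_\delta \circ e(f)$. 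But the universal property only produces that factorization; it carries no information about the marking, and no clause of Definition~\ref{def:compatiblemarking} allows you to pass markedness from a composite to its fibre component. This right-cancellation is not a formal consequence of the conditions you use (coCartesian lifts of inerts are marked, marked edges compose): for instance, over any $\mc A^\otimes$ take $\Gamma = \mc A^\otimes \times \Delta^1$ and mark those edges $(\alpha, e)$ with $\alpha$ inert and either $e$ degenerate or $\alpha$ non-invertible. This satisfies all four conditions of Definition~\ref{def:compatiblemarking}, yet the marked edge $(\alpha, 01)$, for $\alpha$ inert non-invertible, has fibre component $(\mathrm{id},01)$, which is unmarked. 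So the cancellation step cannot be completed from the ingredients you allow yourself; the plan as written is unfixable rather than merely incomplete.

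The ingredient you never invoke is the fourth condition of Definition~\ref{def:compatiblemarking} --- inert pushforwards $\gamma_!\co \Gamma_U \to \Gamma_V$ preserve marked edges --- and that condition is the engine of the paper's proof. The paper argues in the direction opposite to yours: instead of cancelling the coCartesian edge $e(f)$ from the right of $e(h)$, it exhibits $e(g)$ as a \emph{pushforward} of a marked edge. Concretely, it chooses a locally $s$-Cartesian lift $k \co \gamma^! Z \to Z$ of $\gamma$ (Proposition~\ref{prop:expansioncartesian}), factors $h \simeq k \circ m$, and applies Proposition~\ref{prop:trianglecart} to the triangle $h \simeq k\circ m$ to conclude that $m$ is marked; this is essentially free because $e(k)$ is an equivalence, so $e(m)$ is marked because $e(h)$ is. Then, applying $e$ to the square $g f \simeq k m$, both horizontal edges $e(f)$ and $e(k)$ are $p$-coCartesian, so the square is equivalent to the naturality square of the pushforward along the inert morphism $\beta$, identifying $e(g)$ with the pushforward of the marked edge $e(m)$; the pushforward-compatibility condition then makes $e(g)$ marked. (The toy marking above also shows that this last step genuinely needs the fourth condition applied to such naturality squares, i.e.\ to edges crossing fibres, which is how the paper uses it.) In short: the correct mechanism is to push the marked edge forward along $e(f)$, not to cancel $e(f)$, and it runs through the Cartesian lift of $\gamma$ and the one compatibility axiom your outline omits.
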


\begin{proof}
  If $\gamma\co \Delta^1 \to \mc A^\otimes$ is inert, then all edges of $\Delta^1$ map to inert edges of $\mc A^\otimes$. Therefore, $s(f)$, $s(g)$, and $s(h)$ are all inert edges of $\mc A^\otimes$.

  By Proposition~\ref{prop:expansioncocartesian}, $e(f)$ is $p$-coCartesian, and hence marked in $\Gamma$ by compatibility of the marking. If $e(g)$ is marked, so is $e(h)$ because marked edges in $\Gamma$ are closed under composition.

  For the converse, suppose $e(h)$ is marked. Let $k\co \gamma^! Z \to Z$ be a locally $s$-Cartesian lift of $\gamma$ with associated diagram
  \[
    \begin{tikzcd}
      X \ar[r,"f"] \ar[d,"m",swap] \ar[dr,"h"] &
      Y \ar[d,"g"] \\
      \gamma^! Z \ar[r,"k",swap] &
      Z.
    \end{tikzcd}
  \]
  Applying Proposition~\ref{prop:trianglecart} to the lower-left triangle, we find that $m$ is marked, and hence so is $e(m)$. Therefore, applying $e$ to this diagram, we get a diagram
  \[
    \begin{tikzcd}
      e(X) \ar[r,"e(f)"] \ar[d,"e(m)",swap] &
      e(Y) \ar[d,"e(g)"] \\
      e(\gamma^! Z) \ar[r,"e(k)",swap] &
      e(Z)
    \end{tikzcd}
  \]
  in $\Gamma$. The map $e(f)$ is $p$-coCartesian, and $e(k)$ also $p$-coCartesian because it is an equivalence by Proposition~\ref{prop:expansioncartesian}. Therefore, this diagam is equivalent to a diagram of the form
  \[
    \begin{tikzcd}
      e(X) \ar[r] \ar[d,"e(m)",swap] &
      \gamma_! e(X) \ar[d,"\gamma_! e(m)"] \\
      e(\gamma^! Z) \ar[r] &
      \gamma_! e(\gamma^! Z).
    \end{tikzcd}
  \]
  By compatibility of the marking on $\Gamma$, since $e(m)$ is marked, so is $\gamma_! e(m) = e(g)$, as desired.
\end{proof}

\section{Spans}

\begin{rmk}
  \label{rmk:spanfunctor}
Suppose that we have a span of $\infty$-categories in the form of a diagram
\[
   \mc A \xleftarrow{p} \Gamma \xrightarrow{q} \mc C
\]
such that $p$ is a coCartesian fibration.

Given such a span, each object $U \in \mc A$ gives rise to a category $\Gamma_U$ with a functor $q|_U\co \Gamma_U \to \mc C$. Each map $\rho\co U \to V$ gives rise to a functor $\rho_!\co \Gamma_U \to \Gamma_V$, as well as a natural transformation $T_\rho\co \Delta^1 \times \Gamma_U \Rightarrow \mc C$ of functors from $q|_U$ to $q|_V \circ \rho_!$. Both of these are well-defined up to equivalence, and the latter satisfies $T_{\rho \sigma} \sim (T_\rho \circ \sigma_!) \cdot T_\sigma$.
\end{rmk}
\begin{defn}
  \label{def:sumpreserving}
  Suppose that $\mc A^\otimes$ and $\mc C^\otimes$ are $\infty$-operads and that we have a diagram
  \[
   \mc A^\otimes \xleftarrow{p} \Gamma \xrightarrow{q} \mc C^\otimes
  \]
  with $p$ a coCartesian fibration. We say that this diagram is \emph{sum-preserving} if, for any collection of maps
  \[
    \alpha_i\co U \to U_i
  \]
  exhibiting $U$ as the sum $\oplus U_i$ in the $\infty$-operad $\mc C^\otimes$ \cite[2.1.1.15]{lurie-higheralgebra}, the associated natural transformations
  \[
    T_{\alpha_i}\co q|_U \Rightarrow q|_{U_i} \circ (\alpha_i)_!
  \]
  of functors $\Gamma_{U} \to \mc C^\otimes$ exhibit $q|_{U}$ as a sum $\oplus q|_{U_i} \circ (\alpha_i)_!$ of functors to $\mc C^\otimes$.
\end{defn}

\begin{rmk}
  \label{rmk:sumterms}
  For example, this asks that for any sum diagram $U \xleftarrow{\alpha} U \oplus V \xrightarrow{\beta} V$ and any object $X \in \Gamma_{U\oplus V}$, the diagram
  \[
    q(\alpha_! X) \leftarrow q(X) \rightarrow q(\beta_! X)
  \]
  is a direct sum diagram in $\mc C^\otimes$. In particular, any coCartesian lift $X \to \rho_! X$ of an inert morphism must be sent by $q$ to an inert morphism.
\end{rmk}



\begin{defn}
  \label{def:spanexpansion}
  Suppose that $\mc A^\otimes$ and $\mc C^\otimes$ are $\infty$-operads and that we have a diagram
  \[
    \mc A^\otimes \xleftarrow{p} \Gamma \xrightarrow{q} \mc C^\otimes
  \]
  with $p$ a coCartesian fibration. Apply the inert expansion to form the resulting diagram
  \[
    \mc A^\otimes \xleftarrow{s} \mc K^\Gamma \xrightarrow{e} \Gamma \xrightarrow{q} \mc C^\otimes.
  \]
  We refer to $s$ as the \emph{source} map and to the composite $t\co \mc K^\Gamma \to \mc C^\otimes$ as the \emph{target} map, and refer to the diagram
  \[
    \mc A^\otimes \xleftarrow{s} \mc K^\Gamma \xrightarrow{t} \mc C^\otimes
  \]
  as the \emph{inert expansion of the span}.
\end{defn}

\begin{prop}
  \label{prop:expansionsumpreserving}
  Suppose that we have a sum-preserving span
  \[
    \mc A^\otimes \xleftarrow{p} \Gamma \xrightarrow{q} \mc C^\otimes
  \]
  with inert expansion
   \[
    \mc A^\otimes \xleftarrow{s} \mc K^\Gamma \xrightarrow{t} \mc C^\otimes.
  \]
  Suppose that $\oplus U_i \xrightarrow{\rho_i} U_i$ determine a sum diagram in $\mc A^\otimes$. Then any coCartesian lift of this diagram to $\mc K^\Gamma$ is mapped to a sum diagram in $\mc C^\otimes$.
\end{prop}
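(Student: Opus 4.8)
The plan is to reduce to the sum-preserving hypothesis applied to an auxiliary sum diagram living in the ``bar'' objects of the inert expansion. Fix a coCartesian lift: choose an object $\xi = (g, Z)$ of $\mc K^\Gamma$ lying over $\oplus U_i$, consisting of an inert morphism $g\co \oplus U_i \to \overline U$ and an object $Z \in \Gamma_{\overline U}$, and for each $i$ let $f_i\co \xi \to \xi_i$ be an $s$-coCartesian lift of $\rho_i$. By Proposition~\ref{prop:expansioncocartesian} we may take $f_i$ to be characterized by the requirements that $\pi(f_i)$ be $\ev_0$-coCartesian in $\Inert(\mc A)$ and $e(f_i)$ be $p$-coCartesian in $\Gamma$. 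Write $\bar\rho_i = \ev_1\pi(f_i)\co \overline U \to \overline{U_i}$ for the resulting morphism on targets. Since $p \circ e = \ev_1 \circ \pi$, the edge $e(f_i)\co Z \to e(\xi_i)$ is a $p$-coCartesian lift of $\bar\rho_i$, so $e(\xi_i) \simeq (\bar\rho_i)_! Z$; by the construction recalled in Remark~\ref{rmk:spanfunctor}, the map $t(f_i) = q(e(f_i))$ is then the component at $Z$ of the natural transformation $T_{\bar\rho_i}\co q|_{\overline U} \Rightarrow q|_{\overline{U_i}} \circ (\bar\rho_i)_!$.

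First I would prove the key point, which I also expect to be the main obstacle: the morphisms $\{\bar\rho_i\co \overline U \to \overline{U_i}\}$ themselves exhibit $\overline U$ as a sum $\oplus_i \overline{U_i}$ in $\mc A^\otimes$. This is the step where the inert expansion does its work. Analyzing the $\ev_0$-coCartesian lift of each $\rho_i$ reduces, by tracking the underlying combinatorics of the power sets $\mi P(S)$ of Remark~\ref{rmk:inertclassifier}, to a statement about finite pointed sets: if $\oplus U_i$ lies over $S_+$, if $g$ records a subset $A \subseteq S$, and if the sum diagram $\{\rho_i\}$ corresponds to a partition $S = \coprod_i B_i$, then the $\ev_0$-coCartesian pushforward of $A$ along $\rho_i$ is the intersection $A \cap B_i$, and $\bar\rho_i$ is the inert morphism that restricts $\overline U$ onto the subset $A \cap B_i \subseteq A$. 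The sets $A \cap B_i$ partition $A$; hence the $\bar\rho_i$ are inert and lie over a sum diagram in $\NFin$, which is exactly what it means for them to exhibit $\overline U$ as a sum in the $\infty$-operad $\mc A^\otimes$.

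Granting this, the conclusion follows quickly from the hypotheses. Applying the sum-preserving condition (Definition~\ref{def:sumpreserving}) to the sum diagram $\{\bar\rho_i\}$, the natural transformations $T_{\bar\rho_i}$ exhibit $q|_{\overline U}$ as a sum $\oplus_i q|_{\overline{U_i}} \circ (\bar\rho_i)_!$ of functors $\Gamma_{\overline U} \to \mc C^\otimes$. Evaluating this at the single object $Z$, as made explicit in Remark~\ref{rmk:sumterms}, shows that the maps $(T_{\bar\rho_i})_Z\co q(Z) \to q((\bar\rho_i)_! Z)$ form a direct sum diagram in $\mc C^\otimes$. These maps are exactly $t(f_i)\co t(\xi) \to t(\xi_i)$, so the image under $t$ of our coCartesian lift is a sum diagram. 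Finally, since the initial lift $\xi$ was arbitrary and $s$-coCartesian edges over the $\rho_i$ are unique up to equivalence --- which $t$ preserves --- the same holds for every coCartesian lift, as required.
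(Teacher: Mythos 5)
Your proof is correct and follows essentially the same route as the paper's: both analyze an $s$-coCartesian lift through its $\Inert(\mc A)$- and $\Gamma$-components, observe that the induced maps on targets $\overline\rho_i\co \overline U \to \overline U_i$ themselves form a sum diagram, and then apply the sum-preserving hypothesis at the object $Z$. The only difference is one of detail: you spell out the power-set combinatorics showing that the $\overline\rho_i$ exhibit $\overline U$ as $\oplus_i \overline U_i$, a fact the paper asserts implicitly through its notation $\oplus \overline U_i \xrightarrow{\overline\rho_i} \overline U_i$ for the coCartesian lift in $\Inert(\mc A)$.
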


\begin{proof}
  Any coCartesian lift of this sum diagram to $\Inert(\mc A)$ is represented by a collection of diagrams of the form
  \[
    \begin{tikzcd}
      \oplus U_i \ar[r,"\rho_i"] \ar[d] & U_i \ar[d] \\
      \oplus \overline U_i \ar[r,"\overline \rho_i"] & \overline U_i, 
    \end{tikzcd}
  \]
  where the vertical arrows are determined by inert maps $U_i \to \overline U_i$. A further coCartesian lift to $\mc K^\Gamma$ adds diagrams
  \[
    X \rightarrow (\overline \rho_i)_! X
  \]
  for some $X \in \Gamma_{\oplus \overline U_i}$. This data is carried by $t$ to the diagrams
  \[
    q(X) \rightarrow q((\overline \rho_i)_! X)
  \]
  in $\mc C^\otimes$. By the assumption that the original span was sum-preserving, this is a sum diagram in $\mc C^\otimes$.
\end{proof}

\section{Operads of sections}

We will now give a brief description of the model structure for $\infty$-operads over a fixed base, using the formalism of categorical patterns \cite[Appendix B]{lurie-higheralgebra}.
\begin{defn}
  \label{def:naturalpattern}
  For an $\infty$-operad $\mc C^\otimes \to \NFin$, we define the \emph{natural} categorical pattern $\mf P_{\mc C} = (M_{\mc C},T_{\mc C},K_{\mc C})$ on $\mc C^\otimes$:
  \begin{itemize}
  \item $M_{\mc C}$ is the collection of inert edges in $\mc C^\otimes$,
  \item $T_{\mc C}$ is the collection of all 2-simplices in $\mc C^\otimes$, and
  \item $K_{\mc C}$ is the collection of all diagrams $\Lambda^2_0 \to \mc C^\otimes$, representing a pair of inert morphisms $X \to X_1$ and $X \to X_2$ that exhibit $X$ as a sum $X_1 \oplus X_2$ in $\mc C^\otimes$.\footnote{Arguably $K_{\mc C}$ should consist of all sum diagrams in $\mc C^\otimes$, rather than just binary sum diagrams. However, the proof goes through with or without this change, and the only difference for us is whether or not the empty category qualifies as fibrant in objects over $\mc C^\otimes$. (We believe that it should not; a fibrant object should be an $\infty$-operad, and the fiber over $\langle 0\rangle$ should be $\mc C^0$, a contractible category.)}
  \end{itemize}
\end{defn}

This categorical pattern gives rise to the structure of a Quillen model category on $\sSet^+/\mc C^\otimes$ \cite[B.0.20]{lurie-higheralgebra}. The fibrant objects in this ``natural'' model structure are fibrations $\mc D^\otimes \to \mc C^\otimes$ of $\infty$-operads with the natural marking on the source (cf. \cite[2.1.4.6]{lurie-higheralgebra}). 

\begin{thm}
  \label{thm:generalexpansion}
  Suppose that $\mc A^\otimes$ and $\mc C^\otimes$ are $\infty$-operads and that $\mc A^\otimes \xleftarrow{p} \Gamma \xrightarrow{q} \mc C^\otimes$ is a sum-preserving span of marked simplicial sets, with the marking on $\Gamma$ compatible with $p$.

  Then the functor
  \[
    X \mapsto X \times_{\mc A^\otimes} \mc K^\Gamma
  \]
  determines a left Quillen functor
  \[
    L\co \sSet^+/\mc A^\otimes \to \sSet^+/ \mc C^\otimes,
  \]
  where the model structures are determined by the natural categorical patterns. The right adjoint, applied to fibrant objects, determines a functor
  \[
    R\co \Op_\infty/ \mc C^\otimes \to \Op_\infty / \mc A^\otimes.
  \]
\end{thm}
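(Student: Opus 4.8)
The plan is to factor $L$ as a composite $t_! \circ s^*$, where $s^*\co \sSet^+/\mc A^\otimes \to \sSet^+/\mc K^\Gamma$ is pullback along the source map $s$ and $t_!\co \sSet^+/\mc K^\Gamma \to \sSet^+/\mc C^\otimes$ is postcomposition with the target map $t$; indeed $s^*(X) = X \times_{\mc A^\otimes} \mc K^\Gamma$, and $t_!$ merely reinterprets this object over $\mc C^\otimes$. I would equip $\mc K^\Gamma$ with the categorical pattern $\mf Q$ whose marking is the associated marking of Definition~\ref{def:associatedmarking}, whose distinguished $2$-simplices are all of them, and whose distinguished diagrams are the $s$-coCartesian lifts of the sum diagrams of $\mf P_{\mc A}$. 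With this bookkeeping the right adjoint of $L$ is $R = s_* \circ t^*$, a ``sections along $s$'' functor — which is why $s$ being a flat categorical fibration (Corollary~\ref{cor:expansionflat}) is the crucial structural input, flatness being exactly what makes such a sections construction homotopically well behaved and explains the title of this section.

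Cofibrations in each of these model structures are the monomorphisms, and both $s^*$ and $t_!$ visibly preserve monomorphisms, so $L$ preserves cofibrations. Since $L$ is a left adjoint, to conclude that it is left Quillen it remains to check that it carries a generating set of trivial cofibrations — Lurie's explicit list of $\mf P_{\mc A}$-anodyne morphisms \cite[Appendix B]{lurie-higheralgebra} — to trivial cofibrations in $\sSet^+/\mc C^\otimes$. As a map of underlying marked simplicial sets, $L(f)$ is simply $f \times_{\mc A^\otimes} \mc K^\Gamma$, so the entire question is how pullback along $s$ interacts with each class of generator.

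I would organize this verification according to the three features of the natural pattern (Definition~\ref{def:naturalpattern}). For the inner-anodyne generators, which control the underlying $\infty$-categorical structure, I would appeal to flatness of $s$ (Corollary~\ref{cor:expansionflat}): pullback along a flat categorical fibration preserves categorical equivalences, and hence sends these generators to trivial cofibrations. For the generators that force the marked ($=$ inert) edges to be coCartesian, I would use that $s$ is a coCartesian fibration with explicitly identified coCartesian and locally Cartesian edges (Propositions~\ref{prop:expansioncocartesian} and \ref{prop:expansioncartesian}), together with the closure and cancellation properties of the associated marking established in Propositions~\ref{prop:markedclosure}, \ref{prop:trianglecart}, and \ref{prop:trianglecocart}; these are precisely what is needed to see that $s$-coCartesian lifts of inert edges are marked and are carried by $t = q \circ e$ to inert edges of $\mc C^\otimes$ (Remark~\ref{rmk:sumterms}). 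Finally, for the generators built from the sum diagrams of $\mf P_{\mc A}$, I would invoke Proposition~\ref{prop:expansionsumpreserving}: a coCartesian lift to $\mc K^\Gamma$ of a sum diagram in $\mc A^\otimes$ is sent by $t$ to a sum diagram in $\mc C^\otimes$, which is exactly a generating diagram of $\mf P_{\mc C}$.

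The main obstacle is this last bundle of checks: matching the sum-diagram generators across the span, and the closely related point that $t$ must carry $\mf Q$-marked edges to inert edges. Pullback along $s$ is harmless because $s$ is a well-understood (co)Cartesian, flat fibration, but $t$ is not a fibration, so one cannot transport fibrancy data along it directly; the only leverage is the sum-preservation hypothesis, funneled through Proposition~\ref{prop:expansionsumpreserving} and the explicit description of the associated marking. Once all generator classes are verified, $L$ is left Quillen, so its right adjoint $R = s_* t^*$ is right Quillen and therefore preserves fibrant objects. Because the fibrant objects of the natural patterns are precisely the $\infty$-operads over $\mc A^\otimes$ and over $\mc C^\otimes$ \cite[B.0.20]{lurie-higheralgebra}, this restriction yields the desired functor $R\co \Op_\infty/\mc C^\otimes \to \Op_\infty/\mc A^\otimes$.
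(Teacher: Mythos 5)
Your plan assembles exactly the right ingredients---flatness of $s$ (Corollary~\ref{cor:expansionflat}), the (co)Cartesian behavior of $s$ (Propositions~\ref{prop:expansioncocartesian} and~\ref{prop:expansioncartesian}), closure of the associated marking (Proposition~\ref{prop:markedclosure}), the two triangle conditions (Propositions~\ref{prop:trianglecart} and~\ref{prop:trianglecocart}), and sum-preservation (Proposition~\ref{prop:expansionsumpreserving})---and these are precisely the eight hypotheses of Lurie's Theorem~B.4.2 in \cite{lurie-higheralgebra}, which is stated for exactly this situation: a span $\mc A^\otimes \xleftarrow{s} \mc K^\Gamma \xrightarrow{t} \mc C^\otimes$ with a marking on $\mc K^\Gamma$, whose conclusion is that $X \mapsto X \times_{\mc A^\otimes} \mc K^\Gamma$ is left Quillen for the pattern model structures. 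The paper's proof consists of nothing more than verifying those eight conditions and invoking that theorem. You instead propose to reprove the bridging theorem by hand, and this is where the genuine gap lies.

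The specific flaw is that your reduction rests on the claim that the $\mf P_{\mc A}$-anodyne morphisms form ``a generating set of trivial cofibrations'' for the model structure of \cite[B.0.20]{lurie-higheralgebra}. Appendix~B does not provide this, and it should not be expected to hold: it is exactly analogous to the fact that inner anodyne maps do not generate the trivial cofibrations of the Joyal model structure. What the framework actually gives is weaker: $\mf P$-anodyne maps \emph{are} trivial cofibrations, and the fibrant objects are characterized by the right lifting property against them. Consequently, ``$L$ carries the anodyne generators to trivial cofibrations'' does not formally imply ``$L$ is left Quillen.'' The correct route through your checks would be: they imply (by adjunction) that $R$ preserves fibrant objects; then, because weak equivalences in the pattern model structure are detected by mapping spaces into fibrant objects and every object is cofibrant, $L$ preserves weak equivalences and hence is left Quillen. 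Beyond this logical repair, the substantive work---showing that each class of $\mf P_{\mc A}$-anodyne generators is carried by $X \mapsto X \times_{\mc A^\otimes} \mc K^\Gamma$ to a trivial cofibration over $\mc C^\otimes$---is the multi-page technical heart of Lurie's proof of B.4.2, and your proposal gestures at it (``I would appeal to flatness\dots'') without carrying it out. So either cite \cite[B.4.2]{lurie-higheralgebra} and verify its hypotheses, which is the paper's proof and which your listed propositions already accomplish, or supply the omitted verifications; as written, the argument is incomplete.
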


\begin{proof}
  This proof will consist of showing that \cite[B.4.2]{lurie-higheralgebra} applies to the the inert expansion
  \[
    \mc A^\otimes \xleftarrow{s} \mc K^\Gamma \xrightarrow{r} \mc C^\otimes
  \]
  of this span. There are eight criteria to remember and verify, and we have already done the majority of this work.

  \begin{enumerate}
  \item The map $s$ is a flat categorical fibration.

    This is Corollary~\ref{cor:expansionflat}.

  \item The marked edges in $\mc K^\Gamma$ and the inert morphisms in $\mc A^\otimes$ are closed under equivalences and composition.

    This is Proposition~\ref{prop:markedclosure}.

  \item If $\sigma$ is a $2$-simplex of $\mc K^\Gamma$ such that $s(\sigma)$ is part of the categorical pattern on the source, then $r(\sigma)$ is part of the categorical pattern on the target.

    This is clear, because the natural categorical pattern includes all $2$-simplices.

  \item The pullback $s|_\gamma\co \mc K^\Gamma|_\gamma \to \Delta^1$ over any marked edge $\gamma\co \Delta^1 \to \mc A^\otimes$ is a Cartesian fibration.

    This is Proposition~\ref{prop:expansioncartesian}.
    
  \item Each of the simplicial sets $\Lambda^2_0$ in the source categorical pattern is an $\infty$-category, and the pullback $\mc K^\Gamma|_\lambda \to \Lambda^2_0$ over any limit diagram $\lambda\co \Lambda^2_0 \to \mc A^\otimes$ in the source pattern is a coCartesian fibration.

    The simplicial sets $\Lambda^2_0$ are $\infty$-categories. The rest is a consequence of Proposition~\ref{prop:expansioncocartesian}, because the map $s$ is a coCartesian fibration.

  \item Any coCartesian section of a limit diagram $\lambda$ in the source pattern maps, under $r$, to a limit diagram in the target pattern.

    This is Proposition~\ref{prop:expansionsumpreserving}.

  \item Suppose we are given a commutative diagram
    \[
      \begin{tikzcd}
        & Y \ar[dr, "g"]\\
        X \ar[ur, "f"] \ar[rr,"h"] && Z
      \end{tikzcd}
    \]
    in $\mc K^\Gamma$, where $g$ is locally $s$-Cartesian, $s(g)$ is inert, and $s(f)$ is an equivalence. Then $f$ is marked if and only if $h$ is marked.

    This is Proposition~\ref{prop:trianglecart}.

  \item Suppose we are given a commutative diagram
    \[
      \begin{tikzcd}
        & Y \ar[dr, "g"]\\
        X \ar[ur, "f"] \ar[rr,"h"] && Z
      \end{tikzcd}
    \]
    in $\mc K^\Gamma|_\lambda$ for some limit diagram $\lambda$ in the source pattern, where $f$ is $s|_\lambda$-coCartesian and $s|_\lambda(g)$ is an equivalence. Then $g$ is marked if and only if $h$ is marked.

    This is Proposition~\ref{prop:trianglecocart}.\qedhere
\end{enumerate}
\end{proof}

\begin{defn}
  \label{def:operadofsections}
  Under the assumptions of Theorem~\ref{thm:generalexpansion}, for a fibration of $\infty$-operads $D^\otimes \to \mc C^\otimes$ we define the $\infty$-operad of \emph{sections} to be
  \[
    \Sec_{\mc C^\otimes}(\Gamma, \mc D^\otimes)^\otimes = R(\mc D^\otimes). 
  \]
  We view this as a functor $\Op_\infty/\mc C^\otimes \to \Op_\infty/\mc A^\otimes$.
\end{defn}

In particular, if $\mc D^\otimes \to \mc C^\otimes$ is a fibration of $\infty$-operads, we get a fibration of $\infty$-operads $\Sec_{\mc C^\otimes}(\Gamma, \mc D^\otimes)^\otimes \to \mc A^\otimes$; we would like to describe its fibers more explicitly.

\begin{prop}
  \label{prop:generalexpansionfiber}
  For an object $U \in \mc A$, the fiber of the structure map
  \[
    \Sec_{\mc C^\otimes}(\Gamma, \mc D^\otimes)^\otimes \to \mc A^\otimes
  \]
  over $U$ is the category $\Fun^+_{\mc C^\otimes}(\Gamma_U, \mc D^\otimes)$ of marked lifts in the diagram
  \[
    \begin{tikzcd}
      & \mc D^\otimes \ar[d] \\
      \Gamma_U \ar[ur,dashed] \ar[r] & \mc C^\otimes.
    \end{tikzcd}
  \]
\end{prop}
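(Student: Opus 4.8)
The plan is to identify $R$ explicitly as a pullback followed by a pushforward, and then to read off the fibre over $U$ straight from the adjunction. The left adjoint of Theorem~\ref{thm:generalexpansion} is $L = t_!\, s^*$: pull back along $s\co \mc K^\Gamma \to \mc A^\otimes$ and regard the result over $\mc C^\otimes$ via $t$. Its right adjoint is therefore $R = s_*\, t^*$, where $t^*$ is pullback along $t$ and $s_*$ is the right adjoint to $s^*$, computing relative sections along $s$. First I would compute the fibre of $s_* Z \to \mc A^\otimes$ over $U$ for a marked $Z \to \mc K^\Gamma$: feeding $\Delta^n$ constant at $U$ into the adjunction $\Hom_{/\mc A^\otimes}(-, s_* Z) = \Hom_{/\mc K^\Gamma}(s^*(-), Z)$ and using $s^*(\Delta^n_U) = \Delta^n \times (\mc K^\Gamma)_U$ identifies that fibre with the marked mapping object out of the strict fibre $(\mc K^\Gamma)_U = s^{-1}(U)$. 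Taking $Z = t^* \mc D^\otimes$ and converting maps over $\mc K^\Gamma$ into $t^*\mc D^\otimes$ back into maps over $\mc C^\otimes$ into $\mc D^\otimes$ yields
\[
  \bigl(\Sec_{\mc C^\otimes}(\Gamma, \mc D^\otimes)^\otimes\bigr)_U \;\cong\; \Fun^+_{\mc C^\otimes}\bigl((\mc K^\Gamma)_U, \mc D^\otimes\bigr).
\]

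The remaining task is to replace $(\mc K^\Gamma)_U$ by $\Gamma_U$. I would first describe the fibre concretely: $(\mc K^\Gamma)_U = \Gamma \times_{\mc A^\otimes} \ev_0^{-1}(U)$, so its objects are pairs $(\phi, Z)$ with $\phi\co U \to B$ an inert morphism of $\mc A^\otimes$ and $Z \in \Gamma_B$, and the projection $\pi\co (\mc K^\Gamma)_U \to \ev_0^{-1}(U)$ is a coCartesian fibration pulled back from $p$. By Remark~\ref{rmk:inertclassifier} the category $\ev_0^{-1}(U)$ of inert maps out of $U$ has $\mathrm{id}_U$ as an \emph{initial} object, whose $\pi$-fibre is exactly $\Gamma_U$; this gives the inclusion $j\co \Gamma_U \hookrightarrow (\mc K^\Gamma)_U$, and since $t \circ j = q|_U$, restriction along $j$ sends marked lifts to marked lifts and defines $j^*\co \Fun^+_{\mc C^\otimes}((\mc K^\Gamma)_U, \mc D^\otimes) \to \Fun^+_{\mc C^\otimes}(\Gamma_U, \mc D^\otimes)$.

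The heart of the argument, and the step I expect to be the main obstacle, is that $j^*$ is an equivalence. The key observation is that the associated marking on $(\mc K^\Gamma)_U$ contains all $\pi$-coCartesian edges: such an edge has $e$-image a $p$-coCartesian lift of an inert morphism, hence marked in $\Gamma$, while its $s$-image is the identity of $U$; and a marked functor into the fibrant operad $\mc D^\otimes$ must send these to inert edges, hence to edges coCartesian for $\mc D^\otimes \to \mc C^\otimes$. I would then show that every marked lift $F$ of $q|_U$ on $(\mc K^\Gamma)_U$ is forced, up to contractible choice, by its restriction to $\Gamma_U$: using initiality of $\mathrm{id}_U$, each object of the form $(\phi, \phi_! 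W)$ receives a marked edge from $(\mathrm{id}_U, W)$, so that $F(\phi, \phi_! W)$ is the coCartesian pushforward of $F(\mathrm{id}_U, W)$, while the operadic (Segal) structure of $\mc D^\otimes$ together with the sum-preserving hypothesis reduces $F$ on a general $(\phi, Z)$ to its values on the unary inert restrictions of $Z$. Concretely I would package this as the assertion that $j$ is a trivial cofibration in $\sSet^+/\mc C^\otimes$ for the natural pattern $\mf P_{\mc C}$, building $(\mc K^\Gamma)_U$ from $\Gamma_U$ by attaching the marked coCartesian edges emanating from the initial fibre, each attachment being $\mf P_{\mc C}$-anodyne because $\mc D^\otimes \to \mc C^\otimes$ is a coCartesian fibration over inert morphisms; then $j^*$ is a trivial fibration and in particular an equivalence. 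The delicate bookkeeping lies in organizing this filtration so that it respects both the marking and the map $t$ at once. This is precisely the relative left Kan extension along inert morphisms underlying Lurie's construction of $\Fun(\mc C, \mc D)^\otimes$ in \cite[2.2.6]{lurie-higheralgebra}, and I would model the argument on that computation.
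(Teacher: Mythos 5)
Your reduction matches the paper's proof up to the decisive step: the adjunction argument identifying the fiber over $U$ with $\Fun^+_{\mc C^\otimes}\bigl((\mc K^\Gamma)_U, \mc D^\otimes\bigr)$, the identification of the strict fiber $(\mc K^\Gamma)_U$ as the mapping cylinder of $\rho_!\co \Gamma_U \to \Gamma_0$ (where $\rho\co U \to 0$ is the inert map to the object over $\langle 0\rangle$), and the reduction to showing that restriction along $j\co \Gamma_U \hookrightarrow (\mc K^\Gamma)_U$ is an equivalence --- which is indeed true --- are all as in the paper. The gap is your proposed proof of that last step. The filtration you describe, building $(\mc K^\Gamma)_U$ from $\Gamma_U$ by attaching the marked coCartesian edges emanating from the initial fibre, does not exhaust $(\mc K^\Gamma)_U$: the far end of the cylinder is the \emph{entire} fiber $\Gamma_0$, with all of its objects, morphisms and higher simplices, and $\rho_!\co \Gamma_U \to \Gamma_0$ need not be essentially surjective. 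An object $(\rho, Z)$ with $Z$ outside the essential image of $\rho_!$ receives no marked edge from the initial fibre, and the value of a lift $F$ there is not a coCartesian pushforward of anything on $\Gamma_U$; your fallback appeal to ``unary inert restrictions of $Z$'' is vacuous for such $Z$, which lies over $\langle 0\rangle$ and has no unary restrictions. This is not a hypothetical failure: in the paper's own application (Theorem~\ref{thm:thmmainmain}), take $U$ to be the empty set, so that $\Gamma_U = \Delta^\op_\emptyset = \emptyset$ while $\Gamma_0 \simeq \Delta^\op$; your filtration then attaches nothing at all, yet the whole of $\Delta^\op$ must still be handled.

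The paper closes exactly this hole by a different mechanism, and it is the one place the sum-preserving hypothesis really enters: sum-preservation (the empty-sum case of Definition~\ref{def:sumpreserving}) forces $q(\Gamma_0) \subseteq \mc C^\otimes_{\langle 0\rangle}$, so any marked lift over $\mc C^\otimes$ must carry the entire $\Gamma_0$-end of the cylinder into $\mc D^\otimes_{\langle 0\rangle}$, a contractible category consisting of terminal objects of $\mc D^\otimes$. Hence a map $X \times \Gamma_U \to \mc D^\otimes$ over $\mc C^\otimes$ extends over $X \times (\mc K^\Gamma)_U$ essentially uniquely --- every simplex not contained in $\Gamma_U$ has final vertex whose image is forced to be terminal --- not because the extension is any kind of left Kan extension along marked edges, but because its values are forced into a contractible category. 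No coCartesian-pushforward mechanism is needed, nor would one suffice; replacing your anodyne filtration by this terminal-object argument turns your outline into the paper's proof.
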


\begin{proof}
  By adjunction, maps from $X$ to the fiber of $R(\mc D^\otimes)$ over $U$ are maps of marked simplicial sets
  \[
    X \times (\{U\} \times_{\mc A^\otimes} \mc K^\Gamma) \to \mc D^\otimes
  \]
  over $\mc C^\otimes$. The fiber of $\Inert(\mc A)$ over $U$ is equivalent to the two-object poset $(U \to U) \to (U \to 0)$, and hence the fiber product of $\{U\}$ with $\mc K^\Gamma$ is equivalent to the mapping cylinder of the diagram
\[
  \Gamma_U \xrightarrow{\rho_!} \Gamma_0
\]
where $\rho$ is the inert map $U \to 0$. A map of marked simplicial sets from the mapping cylinder of $X \times \Gamma_U \to X \times \Gamma_0$ to $\mc D^\otimes$ over $\mc C^\otimes$ is equivalent to a map $X \times \Gamma_U \to \mc D^\otimes$ over $\mc C^\otimes$, since $\Gamma_0$ must map to the fiber $\mc D^\otimes_{\langle 0\rangle}$ that is contractible (consisting entirely of terminal objects). As this is independent of $X$, this identifies the fiber with the desired section category.
\end{proof}

\section{Morphism spaces}

In this section our goal is to compute morphism spaces in these $\infty$-operads of sections.

\begin{prop}
  \label{prop:arrowliftcategory}
  Suppose that $\mc A^\otimes \xleftarrow{p} \Gamma \xrightarrow{q} \mc C^\otimes$ is a sum-preserving span of marked simplicial sets, with the marking on $\Gamma$ compatible with $p$, and that $\phi\co \Delta^1 \to \mc A^\otimes$ represents an active morphism $\oplus U_i \to V$. Then, for any map of $\infty$-operads $\mc D^\otimes \to \mc C^\otimes$, the functor category
  \[
    \Fun^\phi(\Delta^1, \Sec_{\mc C^\otimes}(\Gamma, \mc D^\otimes))
  \]
  of lifts of $\phi$ is equivalent to the limit of the following diagram:
  \[
    \begin{tikzcd}
      && \prod_i \Fun^+_{\mc C^\otimes}(\Gamma_{U_i},  \mc D^\otimes) \ar[d,"\oplus"]\\
      & \Fun^{+}_{T_\phi}(\Delta^1 \times \Gamma_{\oplus U_i},  \mc D^\otimes) \ar[d,"\ev_1"] \ar[r,"\ev_0"]
      & \Fun^+_{\oplus q|_{U_i}}(\Gamma_{\oplus U_i},  \mc D^\otimes) \\
      \Fun^+_{q|_V}(\Gamma_V,  \mc D^\otimes)\ar[r,"\phi_!"]
      &\Fun^+_{q|_V \circ \phi_!}(\Gamma_{\oplus U_i},  \mc D^\otimes)
    \end{tikzcd}
  \]
\end{prop}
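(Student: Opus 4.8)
The plan is to use the defining adjunction to rewrite $\Fun^\phi(\Delta^1,\Sec)$ as a marked functor category out of the restricted inert expansion $\mc K^\Gamma|_\phi = \Delta^1 \times_{\mc A^\otimes} \mc K^\Gamma$, and then to decompose that category along its coCartesian fibration over $\Delta^1$. Concretely, I would invoke the Quillen adjunction $L \dashv R$ of Theorem~\ref{thm:generalexpansion}, under which $\Sec_{\mc C^\otimes}(\Gamma,\mc D^\otimes)^\otimes = R(\mc D^\otimes)$ (Definition~\ref{def:operadofsections}). An $n$-simplex of $\Fun^\phi(\Delta^1,\Sec)$ is a map $\Delta^1 \times \Delta^n \to \Sec$ over $\phi \circ \mathrm{pr}_1$, and since $L$ carries $(\Delta^1 \times \Delta^n,\,\phi\circ\mathrm{pr}_1)$ to $\Delta^n \times \mc K^\Gamma|_\phi$, the adjunction yields a natural isomorphism
\[
  \Fun^\phi(\Delta^1,\Sec) \;\cong\; \Fun^+_{\mc C^\otimes}(\mc K^\Gamma|_\phi, \mc D^\otimes),
\]
the category of marked lifts of the target map $t\co \mc K^\Gamma|_\phi \to \mc C^\otimes$ to $\mc D^\otimes$. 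The problem is thereby reduced to decomposing this functor category.

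By Proposition~\ref{prop:expansioncocartesian}, the source map restricts to a coCartesian fibration $\mc K^\Gamma|_\phi \to \Delta^1$ with fiber $\mc K_0$ over $0$ (lying over $\oplus U_i$) and fiber $\mc K_1$ over $1$ (lying over $V$). I would present $\mc K^\Gamma|_\phi$ as the correspondence of the coCartesian transition functor $G\co \mc K_0 \to \mc K_1$, so that a marked lift of $t$ amounts to the triple of its restriction $F_0$ to $\mc K_0$, its restriction $F_1$ to $\mc K_1$, and, for each coCartesian edge $x \to Gx$, a compatible morphism $F_0(x) \to F_1(Gx)$ in $\mc D^\otimes$ lying over $t$. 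The two ends of this cylinder will supply the matching conditions appearing at $\ev_0$ and $\ev_1$ in the asserted diagram.

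Next I would identify each term. For $\mc K_1$: as $V$ lies over $\langle 1\rangle$, the argument of Proposition~\ref{prop:generalexpansionfiber} gives $\Fun^+(\mc K_1,\mc D^\otimes) \simeq \Fun^+_{q|_V}(\Gamma_V,\mc D^\otimes)$, the $\Gamma_0$-part contributing nothing since it maps into the contractible fiber $\mc D^\otimes_{\langle 0\rangle}$. For $\mc K_0$: since $\oplus U_i$ lies over $\langle n\rangle$, its fiber contains the copy $\Gamma_{\oplus U_i}$ (at the full subset) and the copies $\Gamma_{U_i}$ (at singletons), joined by the marked coCartesian lifts of the inert projections $\alpha_i$. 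A marked lift must carry these to inert morphisms of $\mc D^\otimes$, so the value on $\Gamma_{\oplus U_i}$ has its inert projections prescribed; by the Segal condition in the $\infty$-operad $\mc D^\otimes$ together with sum-preservation (Definition~\ref{def:sumpreserving}, Remark~\ref{rmk:sumterms}), which guarantees $q(Z) \simeq \oplus_i q((\alpha_i)_! Z)$, this value is forced to equal the sum $\oplus_i F_i \circ (\alpha_i)_!$ of the singleton values $F_i$. Hence $\Fun^+(\mc K_0,\mc D^\otimes) \simeq \prod_i \Fun^+_{\mc C^\otimes}(\Gamma_{U_i},\mc D^\otimes)$, and restriction of $F_0$ to the full subset is exactly the map $\oplus$ into $\Fun^+_{\oplus q|_{U_i}}(\Gamma_{\oplus U_i},\mc D^\otimes)$. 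Finally, the transition datum, restricted to the full-subset copy $\Gamma_{\oplus U_i}$, has $t$-image governed by the natural transformation $T_\phi$ of Remark~\ref{rmk:spanfunctor} (with $G|_{\Gamma_{\oplus U_i}} = \phi_!$), so it is precisely a lift in $\Fun^+_{T_\phi}(\Delta^1\times\Gamma_{\oplus U_i},\mc D^\otimes)$; its $\ev_0$-end recovers the full-subset value in $\Fun^+_{\oplus q|_{U_i}}(\Gamma_{\oplus U_i},\mc D^\otimes)$ and its $\ev_1$-end recovers $F_1\circ\phi_!$ in $\Fun^+_{q|_V\circ\phi_!}(\Gamma_{\oplus U_i},\mc D^\otimes)$. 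Assembling these identifications, a marked lift of $t$ is exactly a compatible point of the stated limit.

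The main obstacle I expect is executing the correspondence/cylinder decomposition rigorously while simultaneously tracking both the marking on $\mc K^\Gamma|_\phi$ and the target map $t$: in particular, showing that the transition datum, a priori natural over all of $\mc K_0$, is determined by its restriction to the full-subset copy $\Gamma_{\oplus U_i}$ and is controlled exactly by $T_\phi$. This reduction rests on the explicit description of $s$-coCartesian edges in Proposition~\ref{prop:expansioncocartesian} and on sum-preservation, which together propagate the full-subset data to the singletons along the inert projections; the real cost is the bookkeeping of these compatibilities rather than any single hard idea.
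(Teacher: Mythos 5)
Your proposal is correct and follows essentially the same route as the paper's proof: the same adjunction identifies $\Fun^\phi(\Delta^1,\Sec_{\mc C^\otimes}(\Gamma,\mc D^\otimes))$ with $\Fun^+_{\mc C^\otimes}(\Delta^1\times_{\mc A^\otimes}\mc K^\Gamma,\mc D^\otimes)$, and the paper then presents the pullback as a mapping cylinder (your correspondence), discards the irrelevant strata, and applies $\Fun^+_{\mc C^\otimes}(-,\mc D^\otimes)$ to convert the resulting colimit decomposition into the stated limit. The one place where your bookkeeping needs a different justification than the one you give: the reduction of the transition datum to the full-subset copy $\Gamma_{\oplus U_i}$ does not follow from ``propagation along the inert projections'' (that mechanism is what identifies $\Fun^+(\mc K_0,\mc D^\otimes)$ with $\prod_i\Fun^+_{\mc C^\otimes}(\Gamma_{U_i},\mc D^\otimes)$); rather, since $\phi$ is active, the $\ev_0$-coCartesian pushforward of any \emph{proper} subset of $\oplus U_i$ is the empty subset, so the transition functor $G$ carries every proper-subset stratum of $\mc K_0$ into the part of $\mc K_1$ lying over $\langle 0\rangle$, where marked lifts take values in the contractible fiber $\mc D^\otimes_{\langle 0\rangle}$ of terminal objects, and hence those components of the transition datum carry no information. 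This is the same contractibility observation you already invoke for $\mc K_1$, and it is exactly how the paper justifies passing to its smaller simplicial set (``the only simplices not in this smaller simplicial set have final vertex that maps to the fiber $\mc C^\otimes_{\langle 0\rangle}$'').
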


\begin{proof}
  A map of simplicial sets $X \to \Fun^\phi(\Delta^1, \Sec_{\mc C^\otimes}(\Gamma, \mc D^\otimes))$ is, by definition, the data of a commutative diagram
  \[
    \begin{tikzcd}
      X \times \Delta^1 \ar[r] \ar[d] &
      \Sec_{\mc C^\otimes}(\Gamma, \mc D^\otimes) \ar[d] \\
      \Delta^1 \ar[r,"\phi",swap] &
      \mc A^\otimes
    \end{tikzcd}
  \]
  of marked simplicial sets, where $\Delta^1$ has the trivial marking. By adjunction, this is equivalent to a marked diagram
  \[
    \begin{tikzcd}
      X \times (\Delta^1 \times_{\mc A^\otimes} \mc K^\Gamma) \ar[r] \ar[d] &
      \mc D^\otimes \ar[d] \\
      \Delta^1 \times_{\mc A^\otimes} \mc K^\Gamma \ar[r,"\tilde \phi",swap] &
      \mc C^\otimes.
    \end{tikzcd}
  \]
  Therefore, we get an identification of functor categories
  \[
    \Fun^\phi(\Delta^1, \Sec_{\mc C^\otimes}(\Gamma, \mc D^\otimes)) \cong
    \Fun^+_{\mc C^\otimes} (\Delta^1 \times_{\mc A^\otimes} \mc K^\Gamma, \mc D^\otimes).
  \]
  As $\mc K^\Gamma \to \mc A^\otimes$ is a coCartesian fibration, the pullback over $\Delta^1$ is part of a homotopy pushout diagram
  \[
    \begin{tikzcd}
      \{1\} \times (\mc K^\Gamma)_{\oplus U_1} \ar[r] \ar[d] &
      \Delta^1 \times (\mc K^\Gamma)_{\oplus U_1} \ar[d] \\
      (\mc K^\Gamma)_{V} \ar[r] &
      \Delta^1 \times_{\mc A^\otimes} \mc K^\Gamma.
    \end{tikzcd}
  \]
  Inside the pushout is a smaller simplicial set which is the colimit of the diagram
  \[
    \begin{tikzcd}
      &\{0\} \times \Gamma_{\oplus U_i} \ar[r] \ar[d] &
      \{0\} \times (\mc K^\Gamma)_{\oplus U_1}\\
      \{1\} \times \Gamma_{\oplus U_1} \ar[d,"\phi_!"] \ar[r] &
      \Delta^1 \times \Gamma_{\oplus U_i} \\
      \Gamma_{V}.
    \end{tikzcd}
  \]
  The only simplices not in this smaller simplicial set have final vertex that maps to the fiber $\mc C^\otimes_{\langle 0\rangle}$. Because the fibers $\mc C^\otimes_{\langle 0\rangle}$ and $\mc D^\otimes_{\langle 0\rangle}$ are contractible, consisting entirely of terminal objects, maps \emph{to} $\mc C^\otimes$ or $\mc D^\otimes$ from the smaller simplicial set are equivalent to maps to them from the larger.

  Applying $\Fun^+_{\mc C^\otimes}(-,\mc D^\otimes)$ to the above colimit diagram then produces the desired limit diagram.
\end{proof}

For convenience, given functors $F_i\co \Gamma_{U_i} \to \mc D^\otimes$ over $\mc C^\otimes$, we will write $\vec F$ for the composite
\[
  \Gamma_{\oplus U_i} \to \prod \Gamma_{U_i} \to \prod \mc D^\otimes \xrightarrow{\oplus} \mc D^\otimes.
\]
Recall from Remark~\ref{rmk:spanfunctor} that there is an associated natural transformation $T_\phi\co (q|_{\oplus U_i}) \Rightarrow (q|_V) \circ \phi_!$ between functors $\Gamma_{\oplus U_i} \to \mc C^\otimes$.

\begin{cor}
  \label{cor:morphismspaces}
  Suppose that $F_i\co \Gamma_{U_i} \to \mc D^\otimes$ and $G\co \Gamma_V \to \mc D^\otimes$ are objects of $\Sec_{\mc C^\otimes}(\Gamma, \mc D^\otimes)$. Then the space
  \[
    \Mul^\phi_{\Sec_{\mc C^\otimes}(\Gamma, \mc D^\otimes)}(\{F_i\}_{1 \leq i \leq n}, G),
  \]
  parametrizing lifts of the active morphism $\phi$, is equivalent to the space
  \[
    \Nat^{T_\phi}(\vec F, G \circ \phi_!)
  \]
  of lifts of $T_\phi$ to a natural transformation of marked functors.
\end{cor}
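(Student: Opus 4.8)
The plan is to deduce this directly from Proposition~\ref{prop:arrowliftcategory} by passing to a homotopy fiber. For any $\infty$-operad, a lift of the active morphism $\phi\co \oplus U_i \to V$ with prescribed source and target is precisely a point of the arrow category $\Fun^\phi(\Delta^1, \Sec_{\mc C^\otimes}(\Gamma, \mc D^\otimes))$ lying over the given endpoints; concretely, the multimapping space $\Mul^\phi(\{F_i\}, G)$ is the homotopy fiber of this functor category under the endpoint evaluation $(\ev_0, \ev_1)$ over the pair consisting of the tuple $\{F_i\}$ (an object over $\oplus U_i$) and $G$ (an object over $V$). I would therefore start from the identification of $\Fun^\phi(\Delta^1, \Sec_{\mc C^\otimes}(\Gamma, \mc D^\otimes))$ with the limit diagram of Proposition~\ref{prop:arrowliftcategory} and extract the relevant fiber.

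The key step is to match the two endpoint evaluations with legs of that limit diagram. A lift of $\phi$ has a source object lying over $\oplus U_i$; in the limit diagram this datum is recorded by the top-right vertex $\prod_i \Fun^+_{\mc C^\otimes}(\Gamma_{U_i}, \mc D^\otimes)$, and fixing it at the tuple $\{F_i\}$ forces, via the map labelled $\oplus$, the source $\ev_0$ of the middle term to equal $\vec F$. Dually, the target object lies over $V$ and is recorded by the bottom-left vertex $\Fun^+_{q|_V}(\Gamma_V, \mc D^\otimes)$; fixing it at $G$ forces, via $\phi_!$, the target $\ev_1$ of the middle term to equal $G \circ \phi_!$. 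Imposing both constraints, the homotopy fiber of the limit over $(\{F_i\}, G)$ collapses onto the subspace of $\Fun^+_{T_\phi}(\Delta^1 \times \Gamma_{\oplus U_i}, \mc D^\otimes)$ consisting of those marked functors over $T_\phi$ whose restriction along $\ev_0$ is $\vec F$ and whose restriction along $\ev_1$ is $G \circ \phi_!$. By definition this is $\Nat^{T_\phi}(\vec F, G \circ \phi_!)$, which gives the asserted equivalence.

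I expect the content here to be bookkeeping rather than anything deep. The one thing to verify is that the homotopy fiber over the two prescribed corners commutes with the limit and collapses the diagram onto the middle vertex with its two evaluations pinned down. This is automatic: fixing the top-right and bottom-left vertices at $\{F_i\}$ and $G$ turns the remaining two vertices into the equations $\ev_0(-) = \vec F$ and $\ev_1(-) = G \circ \phi_!$ on the middle term, leaving exactly the prescribed space of natural transformations. The only point requiring genuine care is confirming that the maps $\oplus$ and $\phi_!$ carry $\{F_i\}$ and $G$ to precisely $\vec F$ and $G \circ \phi_!$, which is immediate from the definition of $\vec F$ recalled just above the statement and from the description of $T_\phi$ in Remark~\ref{rmk:spanfunctor}.
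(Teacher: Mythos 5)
Your proposal is correct and takes essentially the same route as the paper, whose proof consists of exactly this step: taking the fiber of the identification of $\Fun^\phi(\Delta^1, \Sec_{\mc C^\otimes}(\Gamma, \mc D^\otimes))$ from Proposition~\ref{prop:arrowliftcategory} over the pair $(\{F_i\}, G)$, so that the limit collapses onto the middle vertex with its two evaluations pinned to $\vec F$ and $G \circ \phi_!$. The paper records this in one sentence; your write-up just makes the bookkeeping explicit.
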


\begin{proof}
  This follows from Proposition~\ref{prop:arrowliftcategory} by taking the fiber of the functor category $\Fun^\phi(\Delta^1, \Sec_{\mc C^\otimes}(\Gamma, \mc D^\otimes))$ over $(F_i) \in \prod_i \Fun^+_{\mc C^\otimes}(\Gamma_{U_i},  \mc D^\otimes)$ and $G \in \Fun^+_{\mc C^\otimes}(\Gamma_V,  \mc D^\otimes)$.
\end{proof}

\section{CoCartesian properties}

\begin{defn}
  \label{def:objectwisenatural}
  Suppose that $r\co \mc D \to \mc C$ is a coCartesian fibration and that $T\co \Delta^1 \times L \to \mc C$ represents a natural transformation $T\co f \Rightarrow g$ between functors $f,g\co L \to \mc C$. Then we define
  \[
    T_!\co \Fun_f(L,\mc D) \to \Fun_g(L,\mc D)
  \]
  to be the map on functor categories that, object-by-object, applies the functor associated to the natural transformation:
  \[
    (T_! F)(x) = (Tx)_! (F(x)).
  \]
\end{defn}

The following records some basic properties of this construction.

\begin{prop}
  \label{prop:objectwisecomposition}
  For composable natural transformations $T\co f \Rightarrow g$ and $S\co g \Rightarrow h$, we have a natural equivalence
  \[
    (S\cdot T)_!(F) \simeq S_! (T_! (F)).
  \]
  For a functor $\alpha\co L \to L'$ and a natural transformation $T\co f \Rightarrow g$ between functors $L' \to \mc C$, we have a natural equivalence
  \[
    (T_! F) \circ \alpha \simeq (T \circ \alpha)_! (F \circ \alpha).
  \]
  For functors $F\co L \to \mc D$ over $f$ and $G\co L \to \mc D$ over $g$ and a natural transformation $T\co f \Rightarrow g$, there is an equivalence between spaces of lifted natural transformations
  \[
    \Nat^T(F,G) \simeq \Nat^g(T_! F, G).
  \]
\end{prop}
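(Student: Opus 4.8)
The plan is to recognize all three statements as formal consequences of the functoriality of coCartesian transport, organized through the functor-category fibration. Since $r\co \mc D \to \mc C$ is a coCartesian fibration, so is the postcomposition functor $r_*\co \Fun(L,\mc D) \to \Fun(L,\mc C)$, and an edge of $\Fun(L,\mc D)$ --- that is, a natural transformation --- is $r_*$-coCartesian exactly when each of its components is $r$-coCartesian (cf. \cite[3.1.2.1]{lurie-htt}). The first and central observation I would record is that the object-by-object construction of Definition~\ref{def:objectwisenatural} is nothing but coCartesian transport in this functor-category fibration: viewing $f, g \in \Fun(L,\mc C)$, the lift $F \in \Fun(L,\mc D)_f$, and $T$ as an edge $f \to g$, the $r_*$-coCartesian pushforward of $F$ along $T$ has components $(Tx)_!(F(x))$ by the pointwise characterization, and so coincides with $T_! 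F$. Thus $T_!$ is the coCartesian transport functor $\Fun(L,\mc D)_f \to \Fun(L,\mc D)_g$ attached to the edge $T$.

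Granting this identification, the first equivalence is the composition law for coCartesian transport: the vertical composite $S\cdot T$ is the composite of the edges $T$ and $S$ in $\Fun(L,\mc C)$, and transport along a composite is the composite of transports, giving $(S\cdot T)_! \simeq S_! \circ T_!$. The second equivalence is base change. For $\alpha\co L \to L'$, precomposition with $\alpha$ fits into a commuting square of coCartesian fibrations
\[
  \begin{tikzcd}
    \Fun(L',\mc D) \ar[r,"\alpha^*"] \ar[d,"r_*",swap] & \Fun(L,\mc D) \ar[d,"r_*"] \\
    \Fun(L',\mc C) \ar[r,"\alpha^*",swap] & \Fun(L,\mc C)
  \end{tikzcd}
\]
in which the top horizontal functor preserves coCartesian edges, since restricting an objectwise $r$-coCartesian transformation along $\alpha$ leaves it objectwise $r$-coCartesian. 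A map of coCartesian fibrations that preserves coCartesian edges commutes with coCartesian transport, and this yields $\alpha^*(T_! F) \simeq (\alpha^* T)_!(\alpha^* F)$, which is exactly $(T_! F)\circ\alpha \simeq (T\circ\alpha)_!(F\circ\alpha)$.

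The third equivalence is the genuine content, and it is the mapping-space universal property of coCartesian edges. The pushforward comes with a unit transformation $u\co F \Rightarrow T_! F$ over $T$, assembled from the componentwise coCartesian lifts $F(x) \to (Tx)_! F(x)$; it is $r_*$-coCartesian by the pointwise criterion. For any $G \in \Fun(L,\mc D)_g$, mapping out of $u$ produces a homotopy pullback square
\[
  \begin{tikzcd}
    \Map_{\Fun(L,\mc D)}(T_! F, G) \ar[r] \ar[d] & \Map_{\Fun(L,\mc D)}(F, G) \ar[d] \\
    \Map_{\Fun(L,\mc C)}(g, g) \ar[r,"T^*",swap] & \Map_{\Fun(L,\mc C)}(f, g)
  \end{tikzcd}
\]
by \cite[2.4.4.3]{lurie-htt}. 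The point $\mathrm{id}_g$ of the lower-left corner is carried by $T^*$ to $T$ in the lower-right corner, so on passing to fibers the square restricts to an equivalence between the space of lifts of $\mathrm{id}_g$, namely $\Nat^g(T_! F, G)$, and the space of lifts of $T$, namely $\Nat^T(F, G)$; concretely the equivalence is precomposition with $u$.

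The one step carrying real weight is the identification in the first paragraph: one must trust that the naive objectwise pushforward of Definition~\ref{def:objectwisenatural} is coherent, i.e. that it genuinely underlies a functor and agrees with transport in $\Fun(L,\mc D)$. This is precisely what the pointwise characterization of $r_*$-coCartesian edges buys us. After that, all three assertions are standard structural facts about coCartesian fibrations --- composition of transport, invariance under base change, and the mapping-out universal property --- and involve no further combinatorial work.
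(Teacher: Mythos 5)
Your argument is correct, but there is nothing in the paper to measure it against: the paper states Proposition~\ref{prop:objectwisecomposition} with no proof at all, introducing it only as a record of ``basic properties'' of the construction $T_!$. Your proposal therefore supplies the justification the paper leaves implicit, and it does so in what is surely the intended way. The load-bearing step---recognizing the objectwise pushforward of Definition~\ref{def:objectwisenatural} as coCartesian transport along the edge $T\co f \to g$ in the fibration $r_*\co \Fun(L,\mc D)\to\Fun(L,\mc C)$, via the pointwise criterion for $r_*$-coCartesian edges---is the right one, and as you observe it does double duty: the ``object-by-object'' formula only defines a functor of $\infty$-categories at all because it agrees with this transport. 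Granting the identification, your three deductions are standard and correctly executed: composites of coCartesian edges are coCartesian, so transport along $S\cdot T$ is $S_!\circ T_!$; a map of coCartesian fibrations preserving coCartesian edges (here $\alpha^*$, which preserves them by the pointwise criterion) commutes with transport; and mapping out of the coCartesian unit $u\co F \Rightarrow T_!F$ gives a homotopy pullback square whose fibers over $\mathrm{id}_g$ and $T$ are exactly $\Nat^g(T_!F,G)$ and $\Nat^T(F,G)$. Two cosmetic points only: the results you cite, \cite[3.1.2.1]{lurie-htt} and \cite[2.4.4.3]{lurie-htt}, are stated for Cartesian fibrations and Cartesian edges (mapping \emph{into} the edge), so you are really invoking their duals applied to opposite fibrations; and the last step deserves the one-line remark that a homotopy pullback square induces an equivalence on homotopy fibers of the vertical maps over corresponding basepoints. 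Neither affects correctness.
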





\begin{prop}
  \label{prop:morphismspacescocartesian}
  Suppose that $\mc D^\otimes \to \mc C^\otimes$ is a coCartesian fibration, and that $F_i\co \Gamma_{U_i} \to \mc D^\otimes$ and $G\co \Gamma_V \to \mc D^\otimes$ are objects of $\Sec_{\mc C^\otimes}(\Gamma, \mc D^\otimes)$. Then the space
  \[
    \Mul^\phi_{\Sec_{\mc C^\otimes}(\Gamma, \mc D^\otimes)}(\{F_i\}_{1 \leq i \leq n}, G),
  \]
  parametrizing lifts of an active morphism $\phi\co \oplus U_i \to V$, is equivalent to the space
  \[
    \Nat^{q|_V \circ \phi_!}((T_\phi)_! (\vec F), G \circ \phi_!)
  \]
  of natural transformation of marked functors $\Gamma_{\oplus U_i} \to \mc D^\otimes$ over $q|_V \circ \phi_!\co \Gamma_{\oplus U_i} \to \mc C^\otimes$.
\end{prop}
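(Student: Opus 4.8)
The plan is to deduce this directly from Corollary~\ref{cor:morphismspaces} by applying the last equivalence recorded in Proposition~\ref{prop:objectwisecomposition}. Corollary~\ref{cor:morphismspaces} already identifies the multimorphism space in question with $\Nat^{T_\phi}(\vec F, G \circ \phi_!)$, the space of lifts of the natural transformation $T_\phi\co q|_{\oplus U_i} \Rightarrow q|_V \circ \phi_!$ from Remark~\ref{rmk:spanfunctor} to a natural transformation between the marked functors $\vec F$ and $G \circ \phi_!$. The entire content of the present statement is therefore to rewrite this $\Nat$-space once we additionally know that $\mc D^\otimes \to \mc C^\otimes$ is a coCartesian fibration; no new argument about the operad of sections is needed.

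The one genuinely new ingredient relative to Corollary~\ref{cor:morphismspaces} is the coCartesian hypothesis, which is exactly what lets the construction $(T_\phi)_!$ of Definition~\ref{def:objectwisenatural} be formed: since $r\co \mc D^\otimes \to \mc C^\otimes$ is a coCartesian fibration, each component of $T_\phi$ admits a coCartesian pushforward, and $(T_\phi)_!$ sends lifts of $q|_{\oplus U_i}$ to lifts of $q|_V \circ \phi_!$. I would then apply the third equivalence of Proposition~\ref{prop:objectwisecomposition} with $L = \Gamma_{\oplus U_i}$, $f = q|_{\oplus U_i}$, $g = q|_V \circ \phi_!$, $T = T_\phi$, $F = \vec F$, and $G$ replaced by $G \circ \phi_!$. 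This yields $\Nat^{T_\phi}(\vec F, G \circ \phi_!) \simeq \Nat^{q|_V \circ \phi_!}((T_\phi)_! \vec F, G \circ \phi_!)$, and combining with Corollary~\ref{cor:morphismspaces} gives precisely the asserted description of the multimorphism space.

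The step I expect to require genuine care — and the main obstacle — is the compatibility with markings. The equivalence in Proposition~\ref{prop:objectwisecomposition} is recorded for bare functors and natural transformations, whereas both $\Nat$-spaces here are constrained to the marked functors and marked natural transformations coming from the associated marking on $\mc K^\Gamma$ pulled back along $\phi$. I would therefore trace how the marked condition on a lift of $T_\phi$ is transported by the pointwise pushforward: a lift with source $\vec F$ corresponds under the equivalence to a natural transformation out of $(T_\phi)_! \vec F$, and one must check that the marked edges of $\Delta^1 \times \Gamma_{\oplus U_i}$ supplied by the associated marking match on the two sides. The cleanest way to organize this is to carry the marked structure through the composition and precomposition identities of Proposition~\ref{prop:objectwisecomposition}, so that the marking is transported by naturality of the construction rather than verified edge by edge; this is delicate precisely because $\phi$ is active, so $\phi_!$ need not preserve the marking in the naive sense of Definition~\ref{def:compatiblemarking}. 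Once this bookkeeping is in place, the two constrained $\Nat$-spaces are identified and the proposition follows.
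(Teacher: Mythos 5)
Your proposal is correct and is essentially identical to the paper's own proof, which consists of the single sentence ``This is an application of Corollary~\ref{cor:morphismspaces}'' and implicitly invokes the lifting equivalence $\Nat^T(F,G) \simeq \Nat^g(T_!F,G)$ from Proposition~\ref{prop:objectwisecomposition} exactly as you spell out. Your extra attention to the marking bookkeeping is more care than the paper itself exercises here, but it is the same argument, not a different route.
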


\begin{proof}
  This is an application of Corollary~\ref{cor:morphismspaces}.
\end{proof}

\begin{prop}
  \label{prop:prodprescocartesian}
  Suppose that $\mc A^\otimes \xleftarrow{p} \Gamma \xrightarrow{q} \mc C^\otimes$ is a sum-preserving span of marked simplicial sets, with the marking on $\Gamma$ compatible with $p$. Fix a coCartesian fibration of $\infty$-operads $\mc D^\otimes \to \mc C^\otimes$.

  For an active morphism $\phi\co \oplus U_i \to V$ in $\mc A^\otimes$, if the induced map
  \[
    \phi_!\co \Gamma_{\oplus U_i} \to \Gamma_V
  \]
  is an equivalence, then the structure map
  \[
    s\co \Sec_{\mc C^\otimes}(\Gamma, \mc D^\otimes)^\otimes \to \mc A^\otimes
  \]
  has $s$-coCartesian lifts of $\phi$. 
\end{prop}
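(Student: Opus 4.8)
The plan is to exhibit an explicit $s$-coCartesian lift of $\phi$ and to verify its universal property through the morphism-space computation of Proposition~\ref{prop:morphismspacescocartesian}. Given objects $F_i \in \Fun^+_{\mc C^\otimes}(\Gamma_{U_i}, \mc D^\otimes)$ assembled into an object $\{F_i\}$ of $\Sec_{\mc C^\otimes}(\Gamma,\mc D^\otimes)$ over $\oplus U_i$, I first build a candidate target. Since $\phi_!$ is an equivalence, precomposition with $\phi_!$ is an equivalence of functor categories, so I can define $G \in \Fun^+_{\mc C^\otimes}(\Gamma_V, \mc D^\otimes)$ by transporting the marked functor $(T_\phi)_!(\vec F)$ along a homotopy inverse of $\phi_!$; this comes equipped with a canonical equivalence $G \circ \phi_! \simeq (T_\phi)_!(\vec F)$ over $q|_V \circ \phi_!$. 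One checks that $G$ lands over $q|_V$ and is marked, using that $\vec F$ and the object-wise pushforward $(T_\phi)_!$ preserve the marking and that transport along $\phi_!$ does as well. Under Proposition~\ref{prop:morphismspacescocartesian} this equivalence corresponds to a point $\tilde\phi \in \Mul^\phi_{\Sec_{\mc C^\otimes}(\Gamma,\mc D^\otimes)}(\{F_i\}, G)$, our candidate lift.

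Second, I reduce the coCartesian condition to a multimapping-space statement. By the mapping-space criterion for coCartesian edges \cite[2.4.1.10]{lurie-htt}, $\tilde\phi$ is $s$-coCartesian precisely when, for every object $E''$ of $\Sec_{\mc C^\otimes}(\Gamma,\mc D^\otimes)$, a certain square of mapping spaces is a homotopy pullback. Because $\Sec_{\mc C^\otimes}(\Gamma,\mc D^\otimes) \to \mc A^\otimes$ is a fibration of $\infty$-operads (Theorem~\ref{thm:generalexpansion}), both $\Map_{\Sec_{\mc C^\otimes}(\Gamma,\mc D^\otimes)}$ and $\Map_{\mc A^\otimes}$ split along inert projections of the target, so it suffices to treat $E'' = H$ a single object over some $W \in \mc A$. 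Fibering over a morphism $\psi\co V \to W$, the condition becomes that composition with $\tilde\phi$ induces an equivalence $\Mul^\psi_{\Sec_{\mc C^\otimes}(\Gamma,\mc D^\otimes)}(\{G\}, H) \to \Mul^{\psi\phi}_{\Sec_{\mc C^\otimes}(\Gamma,\mc D^\otimes)}(\{F_i\}, H)$. Only the active $\psi$ (those lying over the identity of $\langle 1\rangle$) impose genuine conditions; for the remaining pointed map $\langle 1\rangle \to \langle 1\rangle$ both sides are governed by the contractible fibers over $\langle 0\rangle$ and agree automatically.

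Third, I verify the active case by direct computation. Applying Proposition~\ref{prop:morphismspacescocartesian} to $\psi$ and to the composite $\psi\phi$ gives
\[
  \Mul^\psi_{\Sec_{\mc C^\otimes}(\Gamma,\mc D^\otimes)}(\{G\}, H) \simeq \Nat^{q|_W \circ \psi_!}\big((T_\psi)_!(G), H \circ \psi_!\big),
  \qquad
  \Mul^{\psi\phi}_{\Sec_{\mc C^\otimes}(\Gamma,\mc D^\otimes)}(\{F_i\}, H) \simeq \Nat^{q|_W \circ (\psi\phi)_!}\big((T_{\psi\phi})_!(\vec F), H \circ (\psi\phi)_!\big).
\]
Since $\phi_!$ is an equivalence, precomposition with $\phi_!$ identifies the first $\Nat$-space with natural transformations of functors $\Gamma_{\oplus U_i} \to \mc D^\otimes$. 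Under this identification the source becomes $(T_\psi)_!(G) \circ \phi_!$, which by the base-change identity of Proposition~\ref{prop:objectwisecomposition} equals $(T_\psi \circ \phi_!)_!(G \circ \phi_!) \simeq (T_\psi\circ\phi_!)_!\big((T_\phi)_!(\vec F)\big)$; the composition law of the same proposition together with the cocycle relation $T_{\psi\phi} \sim (T_\psi \circ \phi_!)\cdot T_\phi$ from Remark~\ref{rmk:spanfunctor} rewrites this as $(T_{\psi\phi})_!(\vec F)$, while the target becomes $H \circ \psi_! \circ \phi_! = H \circ (\psi\phi)_!$. Thus both $\Nat$-spaces agree.

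I expect the main obstacle to be the bookkeeping that makes the last step honest: checking that the abstract equivalence of $\Nat$-spaces is genuinely the composition-with-$\tilde\phi$ map, i.e. the naturality of the identifications in Corollary~\ref{cor:morphismspaces} and Proposition~\ref{prop:objectwisecomposition} in the target variable $H$ and under composition of active morphisms, rather than merely an abstract equivalence of spaces. A secondary and more routine point is confirming that $G$ is a bona fide marked section over $q|_V$, for which one must track that $(T_\phi)_!$ and transport along $\phi_!$ preserve the marking.
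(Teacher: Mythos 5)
Your proposal is correct and follows essentially the same route as the paper: you construct the same candidate target $(T_\phi)_!(\vec F)\circ(\phi_!)^{-1}$, reduce via the standard inert-decomposition argument to testing against single objects over some $W \in \mc A$, and verify the universal property by the same chain of equivalences combining Proposition~\ref{prop:morphismspacescocartesian}, the composition and base-change identities of Proposition~\ref{prop:objectwisecomposition}, and the cocycle relation $T_{\psi\phi} \sim (T_\psi \circ \phi_!)\cdot T_\phi$ from Remark~\ref{rmk:spanfunctor}. The naturality bookkeeping you flag as the main obstacle is exactly what the paper's phrase ``sequence of natural equivalences'' carries (naturality in the test object plus Yoneda makes the equivalence the composition-with-$\eta$ map), so your concern is the right one and is resolved the same way.
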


\begin{proof}
  By a standard reduction decomposing general objects into sums, it suffices to construct a map $\eta\co \oplus F_i \to F$ over $\phi$ such that for any $\beta\co V \to W$ in $\mc A$, composition with $\eta$ determines an equivalence
  \[
    \Map^\beta_{\Sec_{\mc C^\otimes}(\Gamma, \mc D^\otimes)}(F,G) \simeq
    \Mul^{\beta\phi}_{\Sec_{\mc C^\otimes}(\Gamma, \mc D^\otimes)}(\{F_i\}, G).
  \]
  However, applying Proposition~\ref{prop:morphismspacescocartesian} and Proposition~\ref{prop:objectwisecomposition} to the identities from Remark~\ref{rmk:spanfunctor} we have a sequence of natural equivalences:
  \begin{align*}
    \Mul^{\beta\phi}(\{F_i\},G)
    &\simeq
      \Nat^{q|_W \circ (\beta\phi)_!}((T_{\beta\phi})_! \vec F, G (\beta\phi)_!)\\
    &\simeq
      \Nat^{q|_W \circ \beta_!\phi_!}((T_{\beta} \circ \phi_!)_! (T_\phi)_!\vec F, G \beta_! \phi_!)\\
    &\simeq 
      \Nat^{q|_W \circ \beta_!}((T_{\beta} \circ \phi_!)_! (T_\phi)_!(\vec F) \circ (\phi_!)^{-1}, G \beta_!)\\
    &\simeq 
      \Nat^{q|_W \circ \beta_!}((T_{\beta})_! (T_\phi)_!(\vec F \circ (\phi_!)^{-1}), G \beta_!)\\
    &\simeq 
      \Map^{\beta}((T_\phi)_!(\vec F \circ (\phi_!)^{-1}), G).
  \end{align*}

  This determines an object
  \[
    F = (T_\phi)_!(\vec F (\phi_!)^{-1})
  \]
  with an $s$-coCartesian map $\eta\co \oplus F_i \to F$ in ${\Sec_{\mc C^\otimes}(\Gamma, \mc D^\otimes)}$, as desired.
\end{proof}

\begin{rmk}
  The explicit target $\phi_!(F_1,\dots,F_n) = (T_\phi)_!(\vec F \circ (\phi_!)^{-1})$ of these coCartesian lifts allows us to calculate the values of this functor. If $\phi_!(X_1,\dots,X_n) = Y$, then
  \[
    \phi_!(F_1,\dots,F_n) (Y) = (T_\phi(X_1,\dots,X_n))_! (F_1(X_1),\dots,F_n(X_n)).
  \]
\end{rmk}

\section{The Boardman--Vogt tensor}

Two $\infty$-operads $\mc A^\otimes$ and $\mc E^\otimes$ have a Boardman--Vogt tensor product $(\mc A \bvt \mc E)^\otimes$, whose defining property is that it is universal among $\infty$-operads with a pairing of marked simplicial sets
\[
  \mc A^\otimes \times \mc E^\otimes \to (\mc A \bvt \mc E)^\otimes
\]
over the smash product functor $\NFin \times \NFin \to \NFin$ \cite[2.2.5]{lurie-higheralgebra}.

The pairing preserves sum diagrams in each variable separately. As a result, the Boardman--Vogt tensor can be a natural source of sum-preserving spans.

\begin{defn}
  \label{def:BVtwistable}
  A diagram $\mc A^\otimes \xleftarrow{p} \Gamma \xrightarrow{r} \mc E^\otimes$ is \emph{sum-trivializing} if $p$ is a coCartesian fibration and $r\co \Gamma \to \mc E^\otimes$ takes $p$-coCartesian maps in $\Gamma$ to equivalences. 
\end{defn}

\begin{prop}
  \label{prop:BVtwist}
  Suppose that we have a sum-trivializing diagram $\mc A^\otimes \xleftarrow{p} \Gamma \xrightarrow{r} \mc E^\otimes$. Then the composite map
  \[
    q\co \Gamma \xrightarrow{(p,r)} \mc A^\otimes \times \mc E^\otimes \to (\mc A \bvt \mc E)^\otimes,
  \]
  makes the span $\mc A^\otimes \leftarrow \Gamma \to (\mc A \bvt \mc E)^\otimes$ sum-preserving.
\end{prop}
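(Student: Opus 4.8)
The plan is to verify Definition~\ref{def:sumpreserving} directly, using the objectwise reformulation supplied by Remark~\ref{rmk:sumterms}. So suppose $\{\alpha_i\co U \to U_i\}$ is a collection of inert morphisms exhibiting $U$ as a sum $\oplus U_i$ in $\mc A^\otimes$, and fix an object $X \in \Gamma_U$. I must show that the values at $X$ of the natural transformations $T_{\alpha_i}$, transported under $q$ — that is, the diagram $\{q(X) \to q((\alpha_i)_! X)\}_i$ — form a sum diagram in $(\mc A \bvt \mc E)^\otimes$.

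The component of $T_{\alpha_i}$ at $X$ is obtained by applying $q$ to a $p$-coCartesian lift $\xi_i\co X \to (\alpha_i)_! X$ of $\alpha_i$. Since $q$ factors as $\Gamma \xrightarrow{(p,r)} \mc A^\otimes \times \mc E^\otimes \to (\mc A \bvt \mc E)^\otimes$, this image is the value of the pairing on the pair $(p(\xi_i), r(\xi_i)) = (\alpha_i, r(\xi_i))$: here $p(\xi_i) = \alpha_i$ because $\xi_i$ lifts $\alpha_i$, while the decisive point is that $r(\xi_i)$ is an \emph{equivalence} in $\mc E^\otimes$. This is precisely the sum-trivializing hypothesis of Definition~\ref{def:BVtwistable} applied to the $p$-coCartesian map $\xi_i$.

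To finish, I appeal to the fact, recorded before Definition~\ref{def:BVtwistable}, that the pairing $\mc A^\otimes \times \mc E^\otimes \to (\mc A \bvt \mc E)^\otimes$ preserves sum diagrams in each variable separately \cite[2.2.5]{lurie-higheralgebra}. Because every $\mc E$-component $r(\xi_i)$ is an equivalence, the diagram $\{q(X) \to q((\alpha_i)_! X)\}_i$ is equivalent to the one obtained by applying the pairing to $\{(\alpha_i, \mathrm{id}_{r(X)})\}_i$, i.e.\ to the sum diagram $\{\alpha_i\}$ in the first variable with the second variable held fixed at $r(X)$. Since $\{\alpha_i\}$ is a sum diagram in $\mc A^\otimes$ and the pairing preserves sum diagrams in the first variable, this is a sum diagram in $(\mc A \bvt \mc E)^\otimes$, which is what the objectwise criterion demands.

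The only real content lies in this last step: the sum-trivializing condition is engineered so that the $\mc E$-direction contributes nothing to the sum structure (being an equivalence), leaving a sum diagram that is detected entirely in the $\mc A$-variable, where the pairing is known to behave well. I expect the care to be needed in pinning down the precise meaning of ``preserves sum diagrams in each variable separately'' from the universal property of the Boardman--Vogt tensor, and in checking that replacing the equivalences $r(\xi_i)$ by identities genuinely preserves the property of being a sum diagram — both of which should be formal once the definitions are unwound.
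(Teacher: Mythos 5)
Your proof is correct and is essentially the paper's own argument, just written out in more detail: the paper's proof likewise observes that a $p$-coCartesian lift of the sum diagram maps under $p$ to the sum diagram and under $r$ to an essentially constant (equivalence) diagram, so that the pairing into $(\mc A \bvt \mc E)^\otimes$, which preserves sums in each variable separately, carries it to a sum diagram.
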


\begin{proof}
  A $p$-coCartesian lift of a sum diagram is taken by $p$ to a sum diagram, and by $r$ to an essentially constant diagram. Therefore, the image in $(\mc A \bvt \mc E)^\otimes$ becomes a sum diagram.
\end{proof}

\begin{defn}
Fix a fibration $\tau\co \mc V^\otimes \to (\mc A \bvt \mc E)^\otimes$ of $\infty$-operads. For any object $X \in \mc A$, the composite
\[
  \{X\} \times \mc E^\otimes \into \mc A^\otimes \times \mc E^\otimes \to (\mc A \bvt \mc E)^\otimes
\]
is a map of $\infty$-operads. We write $\mc V^\otimes_X \to \mc E^\otimes$ for the pullback fibration of $\infty$-operads.
\end{defn}

\begin{rmk}
  Suppose $\tau$ is further a coCartesian fibration giving $\mc V$ the structure of an $(\mc A \bvt \mc E)$-monoidal $\infty$-category. Then its restriction makes $\mc V_X$ into a $\mc E$-monoidal $\infty$-category, and any $f\co X \to Y$ in $\mc A$ then induces a lax $\mc E$-monoidal functor $f_!\co \mc V_X \to \mc V_Y$.
  
  More generally, an active morphism $f\co \oplus X_i \to Y$ in $\mc A^\otimes$ induces a functor
  \[
    f_!\co \mc V_{X_1}^\otimes \times_{\mc E^\otimes} \dots \times_{\mc E^\otimes} \mc V_{X_n}^\otimes \to \mc V_Y^\otimes
  \]
  over $\mc E^\otimes$.
\end{rmk}

\begin{prop}
  \label{prop:BVexpansionfiber}
  Suppose that we have a sum-trivializing diagram $\mc A^\otimes \xleftarrow{p} \Gamma \xrightarrow{r} \mc E^\otimes$ of marked simplicial sets, with the marking on $\Gamma$ compatible with $p$. Then there is a Quillen adjunction
  \[
    \sSet^+/(\mc A \bvt \mc E)^\otimes \leftrightarrows \sSet^+/\mc A^\otimes,
  \]
  inducing a functor
  \[
    \Fun^+_{\mc E^\otimes}(\Gamma,-)^\otimes \co \Op_\infty / (\mc A \bvt \mc E)^\otimes \to \Op_\infty / \mc A^\otimes.
  \]
  For any fibration of $\infty$-operads $\mc V^\otimes \to (\mc A \bvt \mc E)^\otimes$ and $X$ in $\mc A$, the fiber of $\Fun_{\mc E^\otimes}(\Gamma,\mc V^\otimes)^\otimes \to \mc A^\otimes$ over $X$ is the $\infty$-category
  \[
    \Fun^+_{\mc E^\otimes}(\Gamma, \mc V^\otimes)^\otimes_X \simeq \Fun^+_{\mc E^\otimes}(\Gamma_X, \mc V^\otimes_X)
  \]
  of marked lifts in the diagram
  \[
    \begin{tikzcd}
      & \mc V^\otimes_X \ar[d] \\
      \Gamma_X \ar[ur,dashed] \ar[r] & \mc E^\otimes.
    \end{tikzcd}
  \]
\end{prop}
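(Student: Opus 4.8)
The plan is to recognize this proposition as a direct specialization of the machinery already assembled, applied to the span manufactured by the Boardman--Vogt pairing, followed by an identification of fibers.

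First I would produce the relevant span. Since the diagram $\mc A^\otimes \xleftarrow{p} \Gamma \xrightarrow{r} \mc E^\otimes$ is sum-trivializing, Proposition~\ref{prop:BVtwist} shows that the composite $q\co \Gamma \xrightarrow{(p,r)} \mc A^\otimes \times \mc E^\otimes \to (\mc A \bvt \mc E)^\otimes$ makes $\mc A^\otimes \xleftarrow{p} \Gamma \xrightarrow{q} (\mc A \bvt \mc E)^\otimes$ a sum-preserving span, while the marking on $\Gamma$ is compatible with $p$ by hypothesis. I would then invoke Theorem~\ref{thm:generalexpansion} with $\mc C^\otimes = (\mc A \bvt \mc E)^\otimes$: this immediately furnishes the claimed Quillen adjunction and its induced right adjoint $R = \Sec_{(\mc A \bvt \mc E)^\otimes}(\Gamma, -)^\otimes$, which we are renaming $\Fun^+_{\mc E^\otimes}(\Gamma, -)^\otimes$.

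For the fiber description I would apply Proposition~\ref{prop:generalexpansionfiber} to identify the fiber of $\Fun^+_{\mc E^\otimes}(\Gamma, \mc V^\otimes)^\otimes \to \mc A^\otimes$ over $X$ with the category $\Fun^+_{(\mc A \bvt \mc E)^\otimes}(\Gamma_X, \mc V^\otimes)$ of marked lifts of $q|_{\Gamma_X}$. The substantive step is then to re-express this in terms of $\mc E^\otimes$ and $\mc V^\otimes_X$. The key observation is that $p$ is constant at $X$ on the fiber $\Gamma_X = p^{-1}(X)$, so that $q|_{\Gamma_X}$ factors as $\Gamma_X \xrightarrow{r} \{X\} \times \mc E^\otimes \hookrightarrow \mc A^\otimes \times \mc E^\otimes \to (\mc A \bvt \mc E)^\otimes$, i.e. through the very map $\iota\co \{X\} \times \mc E^\otimes \to (\mc A \bvt \mc E)^\otimes$ used to define $\mc V^\otimes_X$. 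By the universal property of the pullback $\mc V^\otimes_X = \mc V^\otimes \times_{(\mc A \bvt \mc E)^\otimes} (\{X\} \times \mc E^\otimes)$, a lift of $q|_{\Gamma_X}$ to $\mc V^\otimes$ is the same datum as a lift of $r|_{\Gamma_X}\co \Gamma_X \to \mc E^\otimes$ to $\mc V^\otimes_X$, which gives the desired identification at the level of bare functors.

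The one point requiring care — and the main, if mild, obstacle — is matching the markings: I must check that an edge of $\Gamma_X$ is carried to an inert edge of $\mc V^\otimes$ if and only if it is carried to an inert edge of $\mc V^\otimes_X$, so that the two notions of marked lift coincide. This holds because $X$ lies over $\langle 1 \rangle$, so that $\iota$ covers the identity of $\NFin$ under $\langle 1 \rangle \wedge - \simeq \mathrm{id}$ and is a map of $\infty$-operads; the standard compatibility of inert morphisms with pullbacks of $\infty$-operads along such a map then exhibits the inert morphisms of $\mc V^\otimes_X$ as exactly those lying over inert morphisms of $\mc V^\otimes$. With the markings identified, the bijection on lifts upgrades to the asserted equivalence $\Fun^+_{(\mc A \bvt \mc E)^\otimes}(\Gamma_X, \mc V^\otimes) \simeq \Fun^+_{\mc E^\otimes}(\Gamma_X, \mc V^\otimes_X)$.
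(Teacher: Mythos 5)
Your proposal is correct and follows essentially the same route as the paper: invoke Proposition~\ref{prop:BVtwist} to get a sum-preserving span into $(\mc A \bvt \mc E)^\otimes$, apply Theorem~\ref{thm:generalexpansion} for the Quillen adjunction, and then use Proposition~\ref{prop:generalexpansionfiber} together with the pullback definition of $\mc V^\otimes_X$ to identify the fibers. Your explicit check that the markings (inert edges) of $\mc V^\otimes_X$ match those pulled back from $\mc V^\otimes$ is a point the paper compresses into ``by definition,'' so it is a welcome, if inessential, elaboration rather than a divergence.
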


\begin{proof}
  We first note that the composite map
  \[
    \Gamma \xrightarrow{q} \mc A^\otimes \times \mc E^\otimes \to (\mc A \bvt \mc E)^\otimes
  \]
  is also a map of marked simplicial sets, because the individual terms are, and it is sum-preserving by Proposition~\ref{prop:BVtwist}. We can then apply Theorem~\ref{thm:generalexpansion}.

  Proposition~\ref{prop:generalexpansionfiber} identifies the fiber with the $\infty$-category of marked lifts in the diagram
  \[
    \begin{tikzcd}
      && \mc V^\otimes \ar[d] \\
      \Gamma_X \ar[urr,dashed] \ar[r] & \{X\} \times \mc E^\otimes \ar[r]
      & (\mc A \bvt \mc E)^\otimes.
    \end{tikzcd}
  \]
  However, these are the same as marked lifts to the pullback, which is $\mc V^\otimes_X$ by definition.
\end{proof}

\section{Fiber product spans}

The following applies Proposition~\ref{prop:BVexpansionfiber} to a special case that is, despite appearances, more straightforward to verify.
\begin{thm}
  \label{thm:mainspan}
  Suppose that we have a diagram
  \[
    \mc O^\otimes \to \mc P^\otimes \leftarrow \mc A^\otimes \leftarrow \Gamma \to D \to \mc E^\otimes
  \]
  of marked simplicial sets such that:
  \begin{itemize}
    \item the objects $\mc A^\otimes$, $\mc E^\otimes$, $\mc O^\otimes$, and $\mc P^\otimes$ are $\infty$-operads with their natural markings;
    \item the maps $\Gamma \to \mc A^\otimes \to \mc P^\otimes$ are both coCartesian fibrations; and
    \item the diagram $\mc A^\otimes \leftarrow \Gamma \to D$ represents a $\mc P$-monoidal functor $\mc A \to \sSet^+/D$, where the latter is symmetric monoidal under fiber product.
  \end{itemize}
  Then there is a Quillen adjunction
  \[
    \sSet^+/(\mc O \bvt \mc E)^\otimes \leftrightarrows \sSet^+/\mc O^\otimes,
  \]
  whose right adjoint induces a functor
  \[
    \Fun^+_D(\Gamma,-)^\otimes\co \Op_\infty/(\mc O \bvt \mc E)^\otimes \to \Op_\infty/\mc O^\otimes,
  \]
  with the following properties.
  \begin{itemize}
  \item For any $U \in \mc P$ and any $X \in \mc O$ over $U$, the fiber $\Fun^+_D(\Gamma,\mc V^\otimes)^\otimes_X$ over $X$ is the $\infty$-category of pairs $(Y,s)$ of an object $Y \in \mc A_U$ and a marked lift in the diagram
    \[
      \begin{tikzcd}
        \Gamma_Y \ar[r,dashed,"s"] \ar[d] & \mc V^\otimes_X \ar[d] \\
        D \ar[r] & \mc E^\otimes.
      \end{tikzcd}
    \]
    More specifically, a map $(Y,s) \to (Z,t)$ consists of a map $f\co Y \to Z$ in $\mc A_U$ and a natural transformation $s \Rightarrow t f_!$ over $\mc E^\otimes$.
  \item The functor $\Fun^+_D(\Gamma,-)^\otimes$ preserves coCartesian fibrations: if $\mc V$ is $(\mc O \bvt \mc E)$-monoidal, then $\Fun^+_D(\Gamma,\mc V^\otimes)^\otimes$ is $\mc O$-monoidal.
  \item The $\mc O$-monoidal structure is expressed as follows. Suppose we have any active morphism $\phi\co \oplus X_i \to Z$ in $\mc O^\otimes$ lying over $\overline \phi\co \oplus U_i \to V$ in $\mc P^\otimes$. Then, for any objects $(Y_i, s_i) \in \Fun^+_D(\Gamma, \mc V^\otimes)^\otimes_{X_i}$ as above, the object $\phi_!((Y_1,s_1),\dots,(Y_n,s_n))$ is represented by the composite
    \[
      \Gamma_{{\overline \phi}_!(Y_1,\dots,Y_n)}
      \xrightarrow{\sim} \Gamma_{Y_1} \times_D \dots \times_D \Gamma_{Y_n}
      \xrightarrow{\prod s_i} \mc V^\otimes_{X_i} \times_{\mc E^\otimes} \dots \times_{\mc E^\otimes} \mc V^\otimes_{X_i}
      \xrightarrow{\phi_!} \mc V^\otimes_Y.
    \]
  \end{itemize}
\end{thm}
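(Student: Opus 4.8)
The plan is to reduce to Proposition~\ref{prop:BVexpansionfiber}, but \emph{not} over $\mc O^\otimes$ directly. The obstruction to a naive application is visible already at the level of underlying categories: Proposition~\ref{prop:BVexpansionfiber} produces an $\infty$-operad whose fiber over a single object is an honest functor category, so it returns a point when the target is terminal, whereas the fiber we want returns $\mc A_U$ (a pair $(Y,s)$ with $\mc V^\otimes_X$ terminal is just an object $Y\in\mc A_U$). The category of pairs is therefore the \emph{total space} of a fibration over $\mc A_U$, and to remember this direction I would first enlarge the base to the pullback operad
\[
  \mc B^\otimes = \mc A^\otimes \times_{\mc P^\otimes} \mc O^\otimes ,
\]
which is a coCartesian fibration over $\mc O^\otimes$ since $\mc A^\otimes \to \mc P^\otimes$ is one, together with the pulled-back family $\Gamma'' = \Gamma \times_{\mc A^\otimes} \mc B^\otimes$, a coCartesian fibration over $\mc B^\otimes$ carrying the pulled-back marking and the composite $\Gamma'' \to \Gamma \to D \to \mc E^\otimes$.

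The first real step is to verify that $\mc B^\otimes \leftarrow \Gamma'' \to \mc E^\otimes$ is sum-trivializing in the sense of Definition~\ref{def:BVtwistable}. A coCartesian edge of $\Gamma'' \to \mc B^\otimes$ is pulled back from a coCartesian edge of $\Gamma \to \mc A^\otimes$, which realizes one of the pushforwards $f_!$ of the functor $\mc A \to \sSet^+/D$. Since this functor lands in $\sSet^+/D$, every $f_!$ is a map \emph{over} $D$, so the transformation $T_f$ becomes the identity after projecting to $D$; the edge therefore maps to a degenerate edge of $D$, hence to an equivalence in $\mc E^\otimes$. This is exactly where the fiber-product hypothesis is used, and it is the conceptual crux, though short once phrased through the ``over $D$'' formulation. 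The remaining hypotheses (compatibility of the marking on $\Gamma''$, and that $\mc B^\otimes$ is an $\infty$-operad with its natural pattern) are inherited from $\Gamma$ and $\mc A^\otimes$. Proposition~\ref{prop:BVexpansionfiber} then gives a Quillen adjunction $\sSet^+/(\mc B \bvt \mc E)^\otimes \leftrightarrows \sSet^+/\mc B^\otimes$ and the functor $\Fun^+_{\mc E}(\Gamma'',-)^\otimes$, whose fiber over $Y \in \mc A_U$ is $\Fun^+_{\mc E}(\Gamma_Y, \mc V^\otimes_X)$ by the fiber formula of Proposition~\ref{prop:generalexpansionfiber}.

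Next I would transport along $j\co \mc B^\otimes \to \mc O^\otimes$: pull $\mc V^\otimes$ back along $\mc B \bvt \mc E \to \mc O \bvt \mc E$, run the construction over $\mc B^\otimes$, and view the output over $\mc O^\otimes$ through $j$. Viewing over $\mc O^\otimes$ is precisely what turns the fiber over $X$ into the total space over $\mc B_X = \mc A_U$ of the categories $\Fun^+_{\mc E}(\Gamma_Y, \mc V^\otimes_X)$, i.e.\ into the category of pairs $(Y,s)$; the description of morphisms as data $(f\co Y \to Z,\ s \Rightarrow t f_!)$ is then Corollary~\ref{cor:morphismspaces} read fiberwise over $\mc A_U$. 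For the monoidal structure I would show that the composite $\Fun^+_{\mc E}(\Gamma'',\mc V)^\otimes \to \mc B^\otimes \to \mc O^\otimes$ is a coCartesian fibration: the map $\mc B^\otimes \to \mc O^\otimes$ is coCartesian, and for an active $\phi\co \bigoplus X_i \to Z$ over $\overline\phi$ in $\mc P^\otimes$ the induced $\phi_!\co \Gamma''_{\oplus Y_i} \to \Gamma''_{\overline\phi_!(\vec Y)}$ is an equivalence—this is the fiber-product identity $\Gamma_{\overline\phi_!(\vec Y)} \simeq \Gamma_{Y_1}\times_D \dots \times_D \Gamma_{Y_n}$—so Proposition~\ref{prop:prodprescocartesian} supplies coCartesian lifts over actives, with inert lifts coming from the operad fibration. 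Composing with the lifts in $\mc B^\otimes$ yields coCartesian lifts over $\mc O^\otimes$, and the explicit target of the third bullet is read off from $\phi_!(\vec F) = (T_\phi)_!(\vec F \circ (\phi_!)^{-1})$ in the remark following Proposition~\ref{prop:prodprescocartesian}.

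The step I expect to be most delicate is this transport along $j$: one must check that the Quillen adjunction of Proposition~\ref{prop:BVexpansionfiber} over $\mc B^\otimes$, combined with the base changes along $\mc B^\otimes \to \mc O^\otimes$ and $\mc B \bvt \mc E \to \mc O \bvt \mc E$, genuinely assembles into the asserted statement over $\mc O^\otimes$ and is compatible with the coCartesian lifts. The subtlety is exactly that passing from $\mc B$ to $\mc O$ is a ``$j_!$''-type (total-space) operation, which is what produces the pairs-fibers rather than the section-fibers that a direct application over $\mc O^\otimes$ would yield; keeping track of this while maintaining the model-categorical input is where the bookkeeping concentrates. Everything else is either formal (pullbacks of coCartesian fibrations and markings) or has already been isolated in the preceding propositions.
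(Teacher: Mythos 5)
Your proposal is correct and follows essentially the same route as the paper's own proof: the paper likewise forms the pullback $\mc B^\otimes = \mc O^\otimes \times_{\mc P^\otimes} \mc A^\otimes$ with the pulled-back family $\tilde\Gamma = \mc O^\otimes \times_{\mc P^\otimes} \Gamma$, verifies the sum-trivializing condition by the same ``pushforwards are maps over $D$'' observation, invokes Proposition~\ref{prop:BVexpansionfiber} to get the operad over $\mc B^\otimes$ with the fiberwise description, and then obtains coCartesian lifts over $\mc O^\otimes$ by combining coCartesian lifts in $\mc B^\otimes$ with Proposition~\ref{prop:prodprescocartesian} and the fiber-product equivalence $(\tilde f)_!\co \Gamma_{\oplus Y_i} \xrightarrow{\sim} \Gamma_Y$. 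The only packaging difference is that the paper feeds the composite span $\mc B^\otimes \leftarrow \tilde\Gamma \to (\mc B \bvt \mc E)^\otimes \to (\mc O \bvt \mc E)^\otimes$ into the machinery directly, so the functor is defined on $\Op_\infty/(\mc O \bvt \mc E)^\otimes$ from the start and no pullback of $\mc V^\otimes$ along $(\mc B \bvt \mc E)^\otimes \to (\mc O \bvt \mc E)^\otimes$ is needed, which removes most of the ``transport along $j$'' bookkeeping you flag as the delicate step.
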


\begin{proof}
  We first define $\tilde \Gamma = \mc O^\otimes \times_{\mc P^\otimes} \Gamma$, and note that there is an associated natural diagram
  \[
    \mc O^\otimes \times_{\mc P^\otimes} \mc A^\otimes \xleftarrow{p} \tilde \Gamma \xrightarrow{q} \mc E^\otimes
  \]
  of marked simplicial sets. The map $p$ is a coCartesian fibration; it is the base-change of the map $\Gamma \to \mc A^\otimes$, the $p$-coCartesian edges map to coCartesian edges in $\Gamma$, and there is a canonical isomorphism $\tilde\Gamma_{(X,Y)} \cong \Gamma_X$ between fibers. The $p$-coCartesian edges map to equivalences in $D$ because all coCartesian edges in $\sSet^+/D$ map to equivalences in $D$, and so this diagram is sum-trivializing. By Proposition~\ref{prop:BVtwist}, the associated diagram
  \[
    \mc O^\otimes \times_{\mc P^\otimes} \mc A^\otimes \xleftarrow{p} \mc O^\otimes \times_{\mc P^\otimes} \Gamma \to ((\mc O \times_{\mc P} \mc A) \bvt \mc E)^\otimes \to (\mc O \bvt \mc E)^\otimes .
  \]
  is sum-preserving. By Proposition~\ref{prop:BVexpansionfiber}, we get a Quillen adjunction inducing a functor
  \[
    \Op_\infty/(\mc O \bvt \mc E)^\otimes \to \Op_\infty/\mc O^\otimes \times_{\mc P^\otimes} \mc A^\otimes.
  \]
  that we denote by $\Fun^+_D(\Gamma,-)^\otimes$. Further, for any fibration $\mc V^\otimes \to (\mc O \bvt \mc E)^\otimes$, the fiber over $(X,Y) \in \mc O \times_{\mc P} \mc A$ is the category of marked lifts in the diagram
  \[
    \begin{tikzcd}
      \Gamma_X \ar[d] \ar[rr,dashed]&&
      \mc V^\otimes_Y \ar[d]\\
      \{(X,Y)\} \times D \ar[r] &
      \{Y\} \times \mc E^\otimes \ar[r] &
      (\mc O \bvt \mc E)^\otimes.
    \end{tikzcd}
  \]

  We now wish to show that the composite
  \[
    \Fun^+_D(\Gamma, \mc V^\otimes)^\otimes \to \mc O^\otimes \times_{\mc P^\otimes} \mc A^\otimes \to \mc O^\otimes
  \]
  is a coCartesian fibration if the map $\mc V^\otimes \to (\mc O \bvt \mc E)^\otimes$ was.

  By the standard reduction, it suffices to show that active maps have coCartesian lifts. Suppose that we have an active map $f\co \oplus X_i \to X$ in $\mc O^\otimes$ with image $\overline f\co \oplus U_i \to V$ in $\mc P^\otimes$, together with lifts $(X_i, Y_i, s_i)$ to $\Fun^+_D(\Gamma, \mc V^\otimes)$. Because $\mc A^\otimes \to \mc P^\otimes$ is a coCartesian fibration, there exists a coCartesian lift $\tilde f\co \oplus Y_i \to Y$ of $\overline f$ to $\mc A^\otimes$, and the corresponding map $(f,\tilde f)\co \oplus (X_i,Y_i) \to (X,Y)$ is a coCartesian lift of $f$ to $\mc O^\otimes \times_{\mc P^\otimes} \mc A^\otimes$.

  By assumption, $\Gamma$ represents a $\mc P$-monoidal functor from $\mc A^\otimes$ to $\sSet^+/D$, so the induced functor
  \[
    (\tilde f)_!\co \Gamma_{\oplus Y_i} \to \Gamma_Y
  \]
  is an equivalence. This makes the map
  \[
    \tilde \Gamma_{\oplus (X_i, Y_i)} \to \tilde \Gamma_{(X,Y)}
  \]
  also an equivalence. Therefore, by Proposition~\ref{prop:prodprescocartesian} the map $(f,\tilde f)$ has a coCartesian lift from $\mc O^\otimes \times_{\mc P^\otimes} \mc A^\otimes$ to $\Fun^+_D(\Gamma,\mc V^\otimes)^\otimes$, represented by the composite
  \[
    \Gamma_{Y}
      \xrightarrow{\sim} \Gamma_{Y_1} \times_D \dots \times_D \Gamma_{Y_n}
      \xrightarrow{\prod s_i} \mc V^\otimes_{X_1} \times_{\mc E^\otimes} \dots \times_{\mc E^\otimes} \mc V^\otimes_{X_n}
      \xrightarrow{\phi_!} \mc V^\otimes_Y.
  \]
  This edge is then also a coCartesian lift of $f$ from $\mc O^\otimes$ to $\Fun^+_D(\Gamma,\mc V^\otimes)^\otimes$, as desired.
\end{proof}

The most straightforward case is when $\mc P$ is the commutative $\infty$-operad, which we can record now.
\begin{cor}
  \label{cor:mainspan}
  Suppose that we have a diagram
  \[
    \mc A^\otimes \leftarrow \Gamma \to D \to \mc E^\otimes
  \]
  of marked simplicial sets such that:
  \begin{itemize}
  \item the objects $\mc A^\otimes$ and $\mc E^\otimes$ are $\infty$-operads with their natural markings;
  \item the map $\Gamma \to \mc A^\otimes$ is a coCartesian fibration;
  \item the diagram $\mc A^\otimes \leftarrow \Gamma \to D$ represents a symmetric monoidal functor $\mc A \to \sSet^+/D$, where the latter is symmetric monoidal under fiber product.
  \end{itemize}
  Then, for any $\infty$-operad $\mc O^\otimes$, there is a Quillen adjunction
  \[
    \sSet^+/(\mc O \bvt \mc E)^\otimes \leftrightarrows \sSet^+/\mc O^\otimes,
  \]
  whose right adjoint induces a functor
  \[
    \Op_\infty/(\mc O \bvt \mc E)^\otimes \to \Op_\infty/\mc O^\otimes,
  \]
  written as $\mc V^\otimes \mapsto \Fun^+_D(\Gamma,\mc V^\otimes)^\otimes$,
  with the following properties.
  \begin{itemize}
  \item For any $X \in \mc O$, the fiber $\Fun^+_D(\Gamma,\mc V^\otimes)^\otimes_X$ over $X$ is the $\infty$-category of pairs $(Y,s)$ of an object $Y \in \mc A$ and a marked lift in the diagram
    \[
      \begin{tikzcd}
        \Gamma_Y \ar[r,dashed,"s"] \ar[d] & \mc V^\otimes_X \ar[d] \\
        D \ar[r] & \mc E^\otimes.
      \end{tikzcd}
    \]
    More specifically, a map $(Y,s) \to (Z,t)$ consists of a map $f\co Y \to Z$ in $\mc A$ and a natural transformation $s \Rightarrow t f_!$ over $\mc E^\otimes$.
  \item The functor $\Fun^+_D(\Gamma,-)^\otimes$ preserves coCartesian fibrations: if $\mc V$ is $(\mc O \bvt \mc E)$-monoidal, then $\Fun^+_D(\Gamma,\mc V^\otimes)^\otimes$ is $\mc O$-monoidal.
  \item The $\mc O$-monoidal structure is expressed as follows. Suppose we have any active morphism $\phi\co \oplus X_i \to Z$ in $\mc O^\otimes$. Then, for any objects $(Y_i, s_i) \in \Fun^+_D(\Gamma, \mc V^\otimes)^\otimes_{X_i}$ as above, the object $\phi_!((Y_1,s_1),\dots,(Y_n,s_n))$ is represented by the composite
    \[
      \Gamma_{{\overline \phi}_!(Y_1,\dots,Y_n)}
      \xrightarrow{\sim} \Gamma_{Y_1} \times_D \dots \times_D \Gamma_{Y_n}
      \xrightarrow{\prod s_i} \mc V^\otimes_{X_1} \times_{\mc E^\otimes} \dots \times_{\mc E^\otimes} \mc V^\otimes_{X_n}
      \xrightarrow{\phi_!} \mc V^\otimes_Y.
    \]
  \end{itemize}
\end{cor}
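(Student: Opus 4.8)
The plan is to deduce this directly from Theorem~\ref{thm:mainspan} by taking $\mc P^\otimes$ to be the commutative $\infty$-operad $\NFin$. The only piece of data appearing in Theorem~\ref{thm:mainspan} that is not explicitly present in this statement is the map $\mc O^\otimes \to \mc P^\otimes$; but every $\infty$-operad $\mc O^\otimes$ carries a canonical structure map $\mc O^\otimes \to \NFin$, and this is the map I would use. With this choice, the given data assembles into a diagram
\[
  \mc O^\otimes \to \NFin \leftarrow \mc A^\otimes \leftarrow \Gamma \to D \to \mc E^\otimes
\]
of exactly the form required by Theorem~\ref{thm:mainspan}, so the whole argument reduces to checking that the hypotheses translate and then unwinding the conclusions.

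First I would verify the hypotheses. The operads $\mc A^\otimes$, $\mc E^\otimes$, and $\mc O^\otimes$ are equipped with their natural markings by assumption, and $\NFin$ is as well. The map $\Gamma \to \mc A^\otimes$ is a coCartesian fibration by hypothesis. The remaining requirement, that $\mc A^\otimes \to \NFin$ be a coCartesian fibration, is precisely the assertion that $\mc A$ is a symmetric monoidal $\infty$-category; this is implicit in the hypothesis that the diagram $\mc A^\otimes \leftarrow \Gamma \to D$ represents a \emph{symmetric monoidal} functor $\mc A \to \sSet^+/D$. Finally, because a $\mc P$-monoidal functor for $\mc P = \NFin$ is the same thing as a symmetric monoidal functor, the third hypothesis of Theorem~\ref{thm:mainspan} specializes verbatim to the third hypothesis here. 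Applying Theorem~\ref{thm:mainspan} then furnishes the Quillen adjunction and the functor $\Fun^+_D(\Gamma,-)^\otimes$.

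It then remains to translate the three conclusions. The key simplification is that the underlying $\infty$-category $\mc P$ of the commutative operad $\NFin$ is contractible, consisting essentially of the single object $\langle 1\rangle$. Consequently, in the first bullet of Theorem~\ref{thm:mainspan}, the object $U \in \mc P$ is unique and drops out of the bookkeeping: the fiber is indexed only by an object $X \in \mc O$ together with $Y \in \mc A$ (rather than $Y \in \mc A_U$), which is exactly the fiber description, and the attendant description of maps, stated here. The coCartesian-preservation bullet transfers without change.

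For the last bullet, I would observe that an active morphism $\phi\co \oplus X_i \to Z$ in $\mc O^\otimes$ lies over the essentially unique active morphism $\overline \phi\co \langle n\rangle \to \langle 1\rangle$ in $\NFin$; under the identification $\mc A \simeq \mc A_{\langle 1\rangle}$, the induced functor $\overline\phi_!$ computes the symmetric monoidal product $Y_1 \otimes \cdots \otimes Y_n$ in $\mc A$, so the equivalence $\Gamma_{\overline\phi_!(Y_1,\dots,Y_n)} \xrightarrow{\sim} \Gamma_{Y_1} \times_D \cdots \times_D \Gamma_{Y_n}$ and the subsequent composite are exactly those displayed here. Since the corollary is a pure specialization, there is no genuinely hard step; the only points demanding care are confirming that the required map $\mc O^\otimes \to \mc P^\otimes$ is supplied automatically by the operad structure, so that no additional hypothesis on $\mc O$ is needed, and checking that the collapse of $\mc P = \NFin$ to a point leaves no residual dependence on $U$ in the fiber descriptions.
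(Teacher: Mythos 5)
Your proof is correct and takes essentially the same approach as the paper: the paper offers no separate argument for this corollary, presenting it as the immediate specialization of Theorem~\ref{thm:mainspan} to $\mc P^\otimes = \NFin$ with the canonical structure map $\mc O^\otimes \to \NFin$. Your verification of the hypotheses (in particular that the symmetric monoidal functor hypothesis supplies the coCartesian fibration $\mc A^\otimes \to \NFin$) and your unwinding of the fiber and monoidal-structure descriptions over the one-object category $\mc P \simeq \{\langle 1\rangle\}$ are exactly the routine details the paper leaves implicit.
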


\section{Enrichments}

The following setup is based on Gepner--Haugseng's approach to enriched categories from \cite[4.1]{gepner-haugseng-enriched}.
\begin{defn}
  \label{def:setx}
  Let $\Set^\times$ denote the category of pairs $(S,(X_s)_{s \in S})$ of a set $S$ and an $S$-indexed tuple of sets, associated to the Cartesian symmetric monoidal structure on sets \cite[2.1.1.7]{lurie-higheralgebra}.
\end{defn}

\begin{defn}
  \label{def:setxdiagram}
  The functor $\Delta^\op_{(-)}\co \Set^\times \times \Delta^\op \to \Set$ takes a tuple $(S,(X_s), \underbar n)$ of a finite set $S$, an $S$-indexed tuple $(X_s)$, and a finite ordered set $\underbar n = \{0 < 1 < \dots < n\}$, and sends it to the set
  \[
    \prod_{s \in S} \Map(\underbar n, X_s) \cong \prod_{s \in S} X^{n+1}.
  \]
\end{defn}

\begin{defn}
  \label{def:enrichmentdiagram}
  Let $\Lambda \to \Set^\times \times \Delta^\op$ denote the associated Grothendieck wreath product of $f$.
\end{defn}

The objects of $\Lambda$ are tuples
\[
  (S,\underbar n, (X_s)_{s \in S},(\phi_s)_{s \in S})
\]
where $\phi_s\co \underbar n \to X_s$ is a map of sets representing an element of $X^{n+1}$. The fiber over $(S, (X_s))$ is the category $\Delta^\op_{\prod X_s}$ of \cite[4.1.1]{gepner-haugseng-enriched}.

\begin{defn}
  \label{def:enrichmentspan}
  The \emph{enrichment span} is the diagram of maps
  \[
    \NFin \leftarrow N\Set^\times \xleftarrow{p} N\Lambda \xrightarrow{r}  N\Delta^\op \to \mb E_1^\otimes
  \]
  viewed as a diagram of $\infty$-categories.
\end{defn}

\begin{prop}
  \label{prop:enrichmentspanfibration}
  In the span
  \[
    N\Set^\times \xleftarrow{p} N\Lambda \xrightarrow{r} N\Delta^\op
  \]
  the maps $p$ and $r$ are coCartesian fibrations. This represents a product-preserving functor
  \[
    X \mapsto \Delta^\op_X\co N\Set^\times \to \sSet^+ / N\Delta^\op.
  \]
  from sets to marked simplicial sets over $N\Delta^\op$.
\end{prop}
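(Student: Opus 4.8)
The plan is to recognize $\Lambda$ as a category of elements and then read everything off from the resulting opfibration. Concretely, $\Lambda$ is the Grothendieck construction of the functor $\Delta^\op_{(-)}\co \Set^\times \times \Delta^\op \to \Set$ of Definition~\ref{def:setxdiagram}, so the projection $\Pi\co \Lambda \to \Set^\times \times \Delta^\op$ is an opfibration in which every morphism is coCartesian; passing to nerves, $N\Pi$ is a coCartesian fibration and \emph{every} edge of $N\Lambda$ is $\Pi$-coCartesian. Both $p$ and $r$ factor as $\Pi$ followed by one of the two projections off the product $N\Set^\times \times N\Delta^\op$, and each such projection is a coCartesian fibration. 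I would therefore deduce that $p$ and $r$ are coCartesian fibrations by the composite-of-coCartesian-fibrations lemma \cite[2.4.2.3]{lurie-htt} (already used in Proposition~\ref{prop:expansioncocartesian}). Since $N\Delta^\op$ and $N\Set^\times$ have no nonidentity isomorphisms, the same lemma forces a morphism of $\Lambda$ to be $p$-coCartesian exactly when its $\Delta^\op$-component is an identity, and $r$-coCartesian exactly when its $\Set^\times$-component is an identity. Explicitly, the $p$-coCartesian transport fixes $\underbar n$ and pushes $(\phi_s)_{s}$ forward along the structure maps of a morphism in $\Set^\times$, while the $r$-coCartesian transport fixes $(S,(X_s))$ and precomposes each $\phi_s$ with the relevant map in $\Delta$.

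Next I would identify the represented functor. By Remark~\ref{rmk:spanfunctor}, the span (with $p$ coCartesian) sends an object $U = (S,(X_s)) \in N\Set^\times$ to the fiber $\Lambda_U$ together with $r|_U\co \Lambda_U \to N\Delta^\op$. The fiber of a category of elements over $U$ is the category of elements of the restricted functor $[n] \mapsto \prod_{s} \Map(\underbar n, X_s) \cong \Map(\underbar n, \prod_s X_s)$, which is exactly the Gepner--Haugseng category $\Delta^\op_{\prod_s X_s}$ over $N\Delta^\op$. Thus the represented functor is $U \mapsto \Delta^\op_{\prod_s X_s}$; restricted to the unit fiber over $\langle 1\rangle \in \NFin$ this is precisely $X \mapsto \Delta^\op_X$.

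Finally, for product-preservation I would show that the span represents a \emph{strong} symmetric monoidal functor from $(\Set,\times)$ to $(\sSet^+/N\Delta^\op, \times_{N\Delta^\op})$, i.e. that the inert-transport maps exhibit $\Delta^\op_{\prod_s X_s}$ as the fiber product $\prod^{N\Delta^\op}_s \Delta^\op_{X_s}$. Using the explicit description of $p$-coCartesian transport from the first paragraph, the inert map $\langle n\rangle \to \langle 1\rangle$ selecting $s = i$ has structure map the identity of $X_i$, so its transport $(\chi_i)_!\co \Delta^\op_{\prod_s X_s} \to \Delta^\op_{X_i}$ is the levelwise projection $(\prod_s X_s)^{n+1} \to X_i^{n+1}$. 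The induced comparison to the fiber product is then, in each simplicial degree, the canonical isomorphism $(\prod_s X_s)^{n+1} \xrightarrow{\sim} \prod_s X_s^{n+1}$; since it is compatible with the maps to $N\Delta^\op$ and carries marked edges to marked edges, it is an isomorphism of marked simplicial sets over $N\Delta^\op$. The main thing to be careful about is precisely this bookkeeping---matching the straightening of the span to the hands-on functor $\Delta^\op_{(-)}$ and checking that the markings agree---but because every fiber is the nerve of an ordinary category and every transport is an honest functor, these comparison maps are genuine isomorphisms rather than mere equivalences, so there is no homotopy-coherence obstacle to overcome.
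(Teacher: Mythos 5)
Correct, and essentially the paper's approach: the paper's own proof is a one-line assertion that both legs of the span associated to a two-variable functor are automatically coCartesian fibrations and that product-preservation is clear from the explicit description of $\Delta^\op_X$, which is exactly what you unpack via the category-of-elements projection, \cite[2.4.2.3]{lurie-htt}, and the levelwise isomorphism $(\prod_s X_s)^{n+1} \cong \prod_s X_s^{n+1}$. One small slip worth fixing: $\Set^\times$ has plenty of nonidentity isomorphisms (bijections of sets), so $r$-coCartesian edges are those whose $\Set^\times$-component is an \emph{isomorphism} rather than an identity; this is harmless here, since showing $r$ is a coCartesian fibration needs only the composite-fibration lemma, and your product-preservation argument uses only the $p$-coCartesian characterization, which rests on the true claim that $\Delta^\op$ has no nonidentity isomorphisms.
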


\begin{proof}
  The first is automatically true of the span $\mc M \xleftarrow{p} \Gamma \xrightarrow{r} \mc N$ associated to any two-variable functor $\mc M \times \mc N \to \Cat_\infty$. The fact that this functor is product-preserving is clear from the explicit description of $\Delta^\op_X$.
\end{proof}

For any $\infty$-operad $\mc O^\otimes \to \NFin$, specializing Corollary~\ref{cor:mainspan} to the diagram
\[
  N\Set^\times \xleftarrow{p} N\Lambda \xrightarrow{r}  N\Delta^\op \to \mb E_1^\otimes
\]
gives the following result.
\begin{thm}
  \label{thm:thmmainmain}
  For any $\infty$-operad $\mc O^\otimes$, there is a Quillen adjunction
  \[
    \sSet^+/(\mc O \bvt \mb E_1)^\otimes \leftrightarrows \sSet^+/\mc O^\otimes,
  \]
  whose right adjoint induces a functor
  \[
    \Cat_\infty^{(-)}\co \Op_\infty / (\mc O \bvt \mb E_1)^\otimes \to \Op_\infty / \mc O^\otimes
  \]
  with the following properties.
  \begin{itemize}
  \item For any $X \in \mc O$, the fiber $(\Cat_\infty^{\mc V})_X$ over $X$ is the $\infty$-category $\Cat_\infty^{\mc V_{X}}$ of $\mc V^\otimes_X$-enriched categories: pairs $(Y,s)$ of a set $Y$ and a diagram
    \[
      \begin{tikzcd}
        \Delta^\op_Y \ar[r,dashed,"s"] \ar[d] &
        \mc V^\otimes_X \ar[d] \\
        N\Delta^\op \ar[r] & \mb E_1^\otimes
      \end{tikzcd}
    \]
    of marked simplicial sets.
    
    A map $(Y,s) \to (Z,t)$ in $\Cat_\infty^{\mc V}$ consists of a map $g\co Y \to Z$ of sets and a natural transformation $s \Rightarrow t g_!$ over $\mb  E_1^\otimes$.
  \item If $\mc V$ is $(\mc O \bvt \mb E_1)$-monoidal, then this makes $\Cat_\infty^{\mc V}$ an $\mc O$-monoidal $\infty$-category.
  \item The $\mc O$-monoidal structure is expressed as follows. Suppose we have any active morphism $\phi\co \oplus X_i \to Z$ in $\mc O^\otimes$. Then, for any objects $(Y_i, s_i) \in \Cat_\infty^{\mc V_{X_i}}$ as above, the object $\phi_!((Y_1,s_1),\dots,(Y_n,s_n))$ is represented by the pair $(\prod Y_i, s)$, where $s$ is the composite
    \[
      \Delta^\op_{\prod Y_i} 
      \simeq \Delta^\op_{Y_1} \times_{\Delta^\op} \dots
      \times_{\Delta^\op} \Delta^\op_{Y_n}
      \xrightarrow{\prod s_i}
      \mc V^\otimes_{X_1} \times_{\mb E_1^\otimes} \dots \times_{\mb E_1^\otimes} \mc V^\otimes_{X_n}
      \xrightarrow{\phi_!} \mc V^\otimes_Y.
    \]
  \end{itemize}
\end{thm}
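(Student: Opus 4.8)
The plan is to obtain this statement as a direct specialization of Corollary~\ref{cor:mainspan}, so that essentially all of the work has already been done. First I would set $\mc A^\otimes = N\Set^\times$, $\mc E^\otimes = \mb E_1^\otimes$, $D = N\Delta^\op$, and $\Gamma = N\Lambda$, so that the enrichment span of Definition~\ref{def:enrichmentspan} is exactly the input diagram $\mc A^\otimes \leftarrow \Gamma \to D \to \mc E^\otimes$ demanded by the corollary. The whole argument is then a matter of checking the three bullet-point hypotheses and transcribing the three output bullets into the language of enriched $\infty$-categories.

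For the hypotheses: $N\Set^\times$ is an $\infty$-operad (the Cartesian structure on sets) and $\mb E_1^\otimes$ is the associative $\infty$-operad, each carrying its natural marking. The map $p\co N\Lambda \to N\Set^\times$ is a coCartesian fibration by Proposition~\ref{prop:enrichmentspanfibration}. The only hypothesis requiring a word is the third. Since the source $N\Set^\times$ carries the Cartesian structure, the relevant symmetric monoidal condition on the span $N\Set^\times \leftarrow N\Lambda \to N\Delta^\op$ is precisely that the classified functor $X \mapsto \Delta^\op_X$ carries products of sets to fiber products over $N\Delta^\op$; this product-preservation is recorded in Proposition~\ref{prop:enrichmentspanfibration}, concretely as $\Delta^\op_{\prod X_s} \cong \prod_{\Delta^\op} \Delta^\op_{X_s}$.

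With the hypotheses verified, the Quillen adjunction and the functor $\Cat_\infty^{(-)}$ are produced directly by the corollary, and the three stated properties follow by unwinding. For the fiber: the underlying category of $\mc A = N\Set^\times$ is $\Set$, and the fiber $\Gamma_Y = N\Lambda_{(\langle 1\rangle, Y)}$ over a single set $Y$ is $\Delta^\op_Y$; hence the first bullet of Corollary~\ref{cor:mainspan} describes the fiber over $X$ as pairs $(Y,s)$ of a set $Y$ together with a marked lift of $\Delta^\op_Y \to N\Delta^\op \to \mb E_1^\otimes$ to $\mc V^\otimes_X$, with a morphism $(Y,s) \to (Z,t)$ given by a map of sets $g\co Y \to Z$ and a natural transformation $s \Rightarrow t g_!$ over $\mb E_1^\otimes$. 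The coCartesian-preservation statement and the explicit active-morphism formula are then immediate specializations of the second and third bullets, using the product-preservation equivalence $\Delta^\op_{\prod Y_i} \simeq \Delta^\op_{Y_1} \times_{\Delta^\op} \dots \times_{\Delta^\op} \Delta^\op_{Y_n}$ to rewrite the domain of the composite.

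I expect the one genuine point of care --- rather than a true obstacle --- to be the identification of the abstract fiber with the Gepner--Haugseng category $\Cat_\infty^{\mc V_X}$: one must confirm that the pairs $(Y,s)$ and their morphisms, as produced by the sections formalism, coincide with the definition of $\mc V_X$-enriched $\infty$-categories recalled in the introduction, and in particular that the marking conventions on $\Delta^\op_Y$ and on $\mc V^\otimes_X$ match those of \cite[4.1]{gepner-haugseng-enriched}. Once this dictionary is fixed, everything else is bookkeeping inherited from Corollary~\ref{cor:mainspan}.
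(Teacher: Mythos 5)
Your proposal is correct and follows exactly the paper's own route: the paper proves Theorem~\ref{thm:thmmainmain} precisely by specializing Corollary~\ref{cor:mainspan} to the enrichment span $N\Set^\times \leftarrow N\Lambda \to N\Delta^\op \to \mb E_1^\otimes$, with the hypotheses (coCartesian fibration and product-preservation of $X \mapsto \Delta^\op_X$) supplied by Proposition~\ref{prop:enrichmentspanfibration}, just as you describe. Your closing remark about matching the resulting pairs $(Y,s)$ with the Gepner--Haugseng definition is the right point of care, and it is handled by the paper's setup in Definitions~\ref{def:setxdiagram}--\ref{def:enrichmentspan}, which is built to make that dictionary tautological.
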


This implies Theorem~\ref{thm:mainthm}.

\nocite{kelly-enriched}

\bibliography{../masterbib}
\end{document}